\newcommand{\beq}{\begin{equation}}
\newcommand{\eeq}{\end{equation}}
\newcommand{\PP}{\mathbb{P}}
\newcommand{\RR}{\mathbb{R}}
\newcommand{\NN}{\mathbb{N}}
\newcommand{\EE}{\mathbb{E}}
\newcommand{\ZZ}{\mathbb{Z}}
\newcommand{\test}{\phi}
\newcommand{\step}{s}
\newcommand{\radius}{r}
\theoremstyle{plain}
\newtheorem{theorem}{Theorem}
\theoremstyle{remark}
\newtheorem{remark}[theorem]{Remark}
\theoremstyle{plain}
\newtheorem{assumption}[theorem]{Assumption}
\theoremstyle{plain}
\theoremstyle{plain}
\newtheorem{lemma}[theorem]{Lemma}
\theoremstyle{plain}
\newtheorem{proposition}[theorem]{Proposition}
\theoremstyle{definition}
\begin{document}

\author{Thomas Bonis\\
	LAMA, Universit\'e Gustave Eiffel,  France \\
	thomas.bonis@univ-eiffel.fr}

\date{}
	\title{Stein's method for steady-state diffusion approximation in Wasserstein distance}

	\maketitle
 
%\begin{frontmatter}
% \title{Stein's method for steady-state diffusion approximation in Wasserstein distance}

%  \runtitle{Stein's method for steady-state diffusion approximation}

%  \begin{aug}
%   \author{\fnms{Thomas}  \snm{Bonis}\corref{}\ead[label=e1]{thomas.bonis@univ-eiffel.fr}},
  
% \runauthor{T. Bonis}
   
%   \affiliation{LAMA, Univ Gustave Eiffel, Univ Paris Est Creteil, CNRS, F-77454 Marne-la-Vall\'ee, France}

%   \address{LAMA, Univ Gustave Eiffel, CNRS, F-77454 Marne-la-Vall\'ee, France \\
%   \printead{e1}}

% \end{aug}

\begin{abstract}
We provide a general steady-state diffusion approximation result which bounds the Wasserstein distance between the reversible measure $\mu$ of a diffusion process and the invariant measure $\nu$ of a Markov chain. Our result is obtained thanks to a generalization of a new approach to Stein's method which may be of independent interest. As an application, we study the convergence of the invariant measure of the random walk on $k$-nearest neighbors graphs, providing a quantitative answer to a problem of interest to the machine learning community.
\end{abstract}
%\begin{keyword}[class=MSC]
%\kwd[Primary ]{60E15}
%\kwd[; secondary ]{26D10, 60J05}
%\end{keyword}

%\begin{keyword}
%\kwd{Stein's Method}
%\kwd{Wasserstein Distance}
%\kwd{Steady-state diffusion approximation}
%\kwd{Random neighbourhood graph}
%\end{keyword}

%\end{frontmatter}

\section{Introduction}

Consider a diffusion process $(X_t)_{t \geq 0}$ with generator $\mathcal{L}_\mu$ and reversible measure $\mu$. Conditions for an approximating family of Markov chains $(Y^n_t)_{t \geq 0}$ with stationary measure $\nu^n$ to converge to $(X_t)_{t \geq 0}$ are well-known, see e.g. Corollary 4.2 \citep{ethierkurtz}. However, such a convergence, called diffusion approximation, does not guarantee the convergence of the measures $\nu^n$ to $\mu$. This problem of steady-state diffusion approximation is at the center of multiple recent works \cite{Braverman1, Braverman3, Braverman2, Gurvich} as a way to study queuing systems. One of the main tool used by these works is Stein's method, which is a standard technique used to bound distances between measures. In particular, these work rely on the traditional approach to Stein's method which aims at solving the following equation, known as Stein's equation, 
\[
u - \int_{\RR^d} u \, d\mu = \mathcal{L}_\mu f_u,
\]
for $u$ belonging to a specific set of functions $\mathcal{U}$. Then, taking the integral over $\nu$ yields 
\[
\int_{\RR^d} u \, d\nu - \int_{\RR^d} u \, d\mu = \int_{\RR^d} \mathcal{L}_\mu f_u \, d\nu.
\]
Thus, bounds on
\[
\sup_{u \in \mathcal{U}} \left|\int_{\RR^d} \mathcal{L}_\mu f_u  \, d\nu \right|,
\]
which are known as Stein Factor bounds, translate to bounds on
\[
\sup_{u \in \mathcal{U}} \left|\int_{\RR^d} u \, d\nu - \int_{\RR^d} u \, d\mu \right|. 
\]
By choosing an appropriate set $\mathcal{U}$, one is then able to bound  various distances between $\mu$ and $\nu$:
\begin{itemize}
\item if $\mathcal{U} = \{ u : \RR^d \rightarrow \RR \mid \|u\|_\infty \leq 1 \}$, then $\sup_{u \in \mathcal{U}} |\int_{\RR^d} u \, d\nu - \int_{\RR^d} u \, d\mu|$ is the total variation distance between $\mu$ and $\nu$;
\item in dimension $1$, if $\mathcal{U} = \{  u : \RR \rightarrow \RR \mid \exists t \in \RR, u(x) = 1_{x \leq t}  \}$, then $\sup_{u \in \mathcal{U}} |\int_{\RR}  u \, d\nu - \int_{\RR^d} u \, d\mu|$ is the Kolmogorov distance between $\mu$ and $\nu$;
\item if $\mathcal{U} = \{  u : \RR^d \rightarrow \RR \mid \forall x,y \in \RR^d,  |u(y) - u(x)| \leq \|y-x\|  \}$, where $\|.\|$ denotes the Euclidean norm, then $\sup_{u \in \mathcal{U}} |\int_{\RR^d} u \, d\nu - \int_{\RR^d} u \, d\mu|$ is the Wasserstein distance of order $1$, also called Kolmogorov-Rubinstein distance, between $\mu$ and $\nu$. 
\end{itemize}
However, deriving such Stein Factor bounds is often difficult in practice, especially in a multivariate setting. 

Recently, \cite{Stein} introduced another approach to Stein's method which does not rely on solving Stein's equation and can be used to tackle general target measures $\mu$ by working with the framework of Markov Triple introduced in \cite{Markov}. This setting is particularly well-suited for our problem as it corresponds to assuming that the target measure $\mu$ is the reversible measure of a generator $\mathcal{L}_\mu$ with semigroup $(P_t)_{t \geq 0}$. 
However, the results obtained in \cite{Stein} require the existence of a Stein kernel for the measure $\nu$ which usually requires $\nu$ to be a continuous measure in the first place. Moreover, this approach also requires strong regularization properties for $(P_t)_{t \geq 0}$, obtained at the cost of further assumptions on $\mu$ which are unlikely to be met even in simple cases, see Section~\ref{sec:regularization} for more details about this matter. 

Whenever the target measure is the Gaussian measure, \cite{Bonis} managed to use the previous approach without relying on a Stein kernel. In fact, the result obtained is quite similar to a steady-state diffusion approximation bound. In this work, we wish to extend this approach to more general target measures by working with the Markov Triple framework used in \cite{Stein}. The main ingredient to generalizing the approach of \cite{Bonis} in this setting are regularization properties for the diffusion semigroup $(P_t)_{t \geq 0}$, which we obtain under a simple curvature-dimension inequality in Section~\ref{sec:regularization}. This allow us to provide a general quantitative steady-state diffusion approximation result, expressed in terms of Wasserstein distance of order $2$, which is presented in Section~\ref{sec:main}. This result is quite natural as it only involves a quantified version of the generator convergence assumption at the center of traditional diffusion approximation results. 
In Section~\ref{sec:graph}, we apply this result to tackle a problem from the machine learning community introduced in \cite{Hashimoto} by studying the convergence of invariant measures on $k$-nearest neighbor graphs. 

\section{Notations}
\label{sec:notations}
Let $d$ be a positive integer. 
For any $k \in \NN$, let $(\RR^d)^{\otimes k}$ be the set of tensors of order $k$, that is elements of the form $(x_j)_{j \in \{1,\dots,d\}^k} \in \RR^{d^k}$.
For $x \in \RR^d$ and $k \in \NN$, we denote by $x^{\otimes k}$ the tensor such that 
\[
\forall j \in \{1,\dots,d\}^k, (x^{\otimes k})_{j} = \prod_{i=1}^k x_{j_i}.
\]
For any $x,y \in (\RR^d)^{\otimes k}$ and any symmetric positive-definite $d \times d$ matrix $A$, let 
\[
<x,y>_A = \sum_{l ,j \in \{1,\dots,d\}^k} \left( x_l y_j \prod_{i=1}^k A_{l_i,j_i} \right),
\]
and, by extension, 
\[
\|x\|^2_A = <x,x>_A.
\]
Finally, we denote by $<.,.>$ the traditional Hilbert-Schmidt scalar product, corresponding to $<.,.>_{I_d}$, and by $\|.\|$ its associated norm. 

For any two spaces $E,F \subset \RR^d$, we denote by $C^k(E,F)$ the set of functions from $E$ to $F$ with continuous partial derivatives of order $k \in \NN$ and we denote by 
$C^k_c(E,F)$ the set of such functions with compact support. For any $k \in \NN$, any function $\test\in C^k( \RR^d ,\RR)$ and any $x \in \RR^d$, we denote by $\nabla^k \test \in (\RR^{d})^{\otimes k}$ the $k$-th gradient of $\test$:
\[
\forall j \in \{1,\dots,d\}^k, (\nabla^k \test (x))_j = \frac{\partial^k \test}{\partial x_{j_1} \dots \partial x_{j_k} }(x).
\]

Consider a connected and open set $E \subset \RR^d$ and a matrix-valued function $a : E \rightarrow (\RR^d)^{\otimes 2}$ such that $a$ is positive-definite on all of $E$. For any $x\in E$, $a(x)$ admits an inverse matrix $a^{-1}(x)$.
We denote by $d_a$ the metric on $E$ induced by $a$ and defined by 
\[
\forall x,y \in E, d_a(x,y) = \inf_{\gamma} \int_0^1 \|\gamma'(t)\|_{a^{-1}(\gamma(t))} dt, 
\]
where the infimum is taken over all curves $\gamma \in C^1( [0,1], E)$ such that $\gamma(0) = x$ and $\gamma(1)= y$. 

Finally, given two probability measures $\mu$ and $\nu$ on $E$, we denote by $W_{2,a}$ the Wasserstein distance of order $2$ with respect to the metric $d_a$ and defined by 
\[
W_{2,a}^2 =  \inf_{\pi}  \int_{E \times E} d_a(x,y)^2 \pi(dx,dy),
\]  
where $\pi$ has marginals $\mu$ and $\nu$. Finally, we denote $W_{2,I_d}$ by $W_2$. 

\section{A general steady-state diffusion approximation result}
\label{sec:main}
\subsection{Statement}
Let $E$ be a domain of $\RR^d$ and consider two functions $a \in C^\infty(E, (\RR^d)^{\otimes 2})$ and $b \in C^\infty(E, \RR^d)$ such that 
$a(x)$ is symmetric and positive-definite for any $x \in E$. Let $\mu$ be the reversible probability measure of a Markov process $(P_t)_{t \geq 0}$ with infinitesimal generator $\mathcal{L}_\mu$ defined by
\[
\forall \test \in C^\infty(E, \RR), x \in E, \mathcal{L}_\mu f(x) = b(x). \nabla f(x) + <a(x), \nabla^2 f(x)>.
\]
We denote by $\Gamma_1$ the carr\'e du champ operator defined by
\[
\forall f,g \in C^\infty(E, \RR), \Gamma_1(f,g) = <\nabla f, \nabla g>_a
\]
and  by $\Gamma_2$ the operator defined by 
\beq
\label{eq:gamma2}
\forall f,g \in C^\infty(E,\RR), \Gamma_2(f,g) = 
\frac{1}{2} \left[ \mathcal{L}_\mu (\Gamma_1(f,g)) - \Gamma_1(\mathcal{L}_\mu f, g) - \Gamma_1( f, \mathcal{L}_\mu g) \right].
\eeq
The triple $(E, \mu, \Gamma_1)$ then forms a Markov Triple, a structure which is extensively studied in \cite{Markov}. Let us note that, while we restrict ourselves by considering only measures on $\RR^d$ for simplicity, the framework of Markov Triples is also suited to deal with measures supported on manifolds.
We assume that the Markov Triple $(E, \mu, \Gamma_1)$ verifies a curvature-dimension condition $CD(\rho,\infty)$. That is, we assume there exists $\rho \in \RR$ such that 
\beq
\label{eq:CD-cond}
\forall \test \in C^\infty(E, \RR), \Gamma_2(\test,\test) \geq \rho \Gamma_1(\test,\test).
\eeq
Under this assumption, the semigroup $(P_t)_{t \geq 0}$ enjoys many regularizing properties which will prove crucial in our approach, see Section~\ref{sec:regularization} for more details. 
We also assume that, for any measure $\eta$ such that $d \eta = h d \mu$, the measure $\eta_t$ with $d\eta_t = P_t h \, d\mu$ converges exponentially fast to $\mu$. More precisely, we assume there exists $c  \geq 1, \kappa > 0$ such that 
\[
\forall t > 0, W_{2,a}(\nu_t,\mu) \leq c e^{-\kappa t}  W_{2,a}(\nu,\mu). 
\]
While such an exponential convergence to $\mu$ is verified whenever $\rho > 0$ with $\kappa = \rho$ and $c = 1$ (see Theorem 9.7.2 \citep{Markov}), it can also be obtained under weaker assumptions. For example,  
if $a = I_d$ and $b = -\nabla V$, where $V \in C^\infty(\RR^d, \RR)$ is a potential, this property is satisfied whenever $V$ is strongly convex outside a bounded set $C$ 
and has bounded first and second order derivatives 
on $C$ \citep{GGB}. An extension of this result for more general functions $a$ is proposed in Theorem 2.1 \citep{Fwang}.

Let us summarize the assumptions made on $\mu$ so far. 
\begin{assumption}
\label{ass:main}
\begin{enumerate}[label=(\roman*)]
\item \label{ass:main-1} $\mu$ is the reversible probability measure of a diffusion operator $\mathcal{L}_\mu$ defined by
\[
\forall \test \in C^\infty(E, \RR^d), \mathcal{L}_\mu \test = b . \nabla \test + <a, \nabla^2 \test>,
\]
with $b \in C^\infty(E, \RR^d)$ and $a \in C^\infty(E, (\RR^d)^{\otimes 2})$ such that $a$ is symmetric positive definite on all of $E$ and $\|b\|_{a^{-1}} \in L_1(\mu)$. Moreover, $\mathcal{L}_\mu$ is the generator of a semigroup $(P_t)_{t \geq 0}$ which is symmetric with respect to the measure $\mu$.
\item \label{ass:main-2} There exists $\rho \in \RR$ such that 
\[
\forall \test \in C^\infty(E, \RR), \Gamma_2(\test,\test) \geq \rho \Gamma_1(\test,\test). 
\]
\item \label{ass:main-3} We have $d_a(.,0) \in L_2(\mu)$. Furthermore, there exists $c \geq 1,\kappa > 0$ such that, for any probability measure $\eta$ with $d_a(.,0) \in L_2(\eta)$ and $d\eta = h d\mu$, 
 \[
 W_{2,a}(\eta_t,\mu) \leq c e^{-\kappa t}  W_{2,a}(\eta,\mu),
 \]
 where $\eta_t$ has measure $P_t h$.
 \end{enumerate}
\end{assumption}
We now consider another measure $\nu$ on $E$ assumed to be the invariant measure of a Markov kernel $K$. In order to present the assumptions required on $\nu$, let us introduce the set of functions $(f_k)_{k \in \NN^\star}$ where, for any $k \in \NN^\star$,
\[
f_k(t) = \begin{cases} 
 e^{- \rho t \max(1,k/2)} \left( \frac{\rho d}{e^{2\rho t/(k-1)}-1} \right)^{(k-1)/2}  \textit{ if } \rho \neq 0 \\
\left(\frac{d (k-1)}{2t}\right)^{(k-1)/2} \textit{ if } \rho = 0 
\end{cases}.
\]
The assumptions we require on $K$ are as follows 
\begin{assumption}
\label{ass:mainnu}
\begin{enumerate}[label=(\Roman*)]
\item \label{ass:mainnu1} $\nu$ is an invariant probability measure for the Markov kernel $K$ and $d_a(.,0) \in L_2(\nu)$.
\item \label{ass:mainnu2} For any $x,y$ for which $K(x,dy) > 0$, $[x,y]$ belongs to $E$.
\item \label{ass:mainnu3} 
There exists $\tau, T > 0$ such that, 
\[
 \int_{E \times E} \sup_{t \in [\tau,T]} \left(\sum_{k=1}^\infty \frac{f_k(t)}{k!} \left\|y-x\right\|^{k}_{a^{-1}(x)} \right)^{2}  K(x,dy) \, \nu(dx) < \infty. 
\]
\end{enumerate}
\end{assumption}
Such assumptions are light in practice, the most restricting one being assumption \ref{ass:mainnu3}. In order to give some intuition about this assumption, let us note that, whenever $a = I_d$ and $\rho = 0$, we can use Cauchy-Schwarz inequality and Stirling's approximation to obtain
\begin{align*}
\int_{E} \sup_{t \in [\tau,T]} & \left(\sum_{k=1}^\infty \left(\frac{d (k-1)}{2t}\right)^{(k-1)/2} \frac{1}{k!} \left\|y-x\right\|^{k} \right)^{2}  K(x,dy)  \\
& \leq 2 \left( \sum_{k=1}^\infty \frac{1}{2^{k}} \right) \int_{E}  \left(\sum_{k=1}^\infty \left(\frac{d (k-1)}{\tau}\right)^{k-1} \frac{1}{k!^2}\left\|y-x\right\|^{2 k} \right)  K(x,dy) \\
& \leq \sqrt{\frac{2}{\pi}}\frac{\tau}{d}  \int_{E} \sum_{k=1}^\infty \frac{1}{k!} \left(\frac{de \left\|y-x\right\|^{2}}{\tau}\right)^k   K(x,dy) \\
& \leq \sqrt{\frac{2}{\pi}}\frac{\tau}{d} \int_{E} \left(e^{\frac{de \left\|y-x\right\|^{2}}{\tau}} - 1\right) K(x,dy).
\end{align*}
Assumption \ref{ass:mainnu3} is thus satisfied whenever $K$ has a Gaussian tail in this simple case. 

We are now ready to state the main result of this paper.
\begin{theorem} 
\label{thm:main}
Suppose Assumption~\ref{ass:main} and \ref{ass:mainnu} are verified. Then, for any $\step > 0$, there exists $C(T, \rho, d) > 0$ such that, for any $s > 0$,
\begin{align*}
\frac{W_{2,a}(\nu, \mu) }{C(T, \rho, d) }  & \leq  \tau \left(\int_E  \|b(x)\|^2_{a^{-1}(x)} d\nu(x)\right)^{1/2} + \sqrt{\tau} \\
&    + \left(\int_E \left\|\frac{1}{\step} \int_E (y-x) K(x,dy)  - b(x) \right\|^2_{a^{-1} (x)} d\nu(x) \right)^{1/2}  \\
 & +  \left(\int_E \left\| \frac{1}{2 \step} \int_{E} (y-x)^{\otimes2} K(x,dy)  - a(x) \right\|^2_{a^{-1}(x)} d\nu(x) \right)^{1/2}\\
 & + \frac{\log(\tau)}{\step} \left(\int_E \left\|  \int_{E} (y-x)^{\otimes 3} K(x,dy)\right\|^2_{a^{-1}(x)} d\nu(x) \right)^{1/2} \\
 & + \sum_{k=4}^\infty \frac{C(T, \rho, d) ^{k-1} }{\step \sqrt{k! \tau^{k-3}}} \left( \left\|  \int_{E} (y-x)^{\otimes k} K(x,dy)\right\|^2_{a^{-1}(x)} d\nu(x)\right)^{1/2}. 
\end{align*}
\end{theorem}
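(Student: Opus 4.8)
The strategy generalizes the semigroup-interpolation approach of \cite{Bonis}: I will express the difference $\int u\,d\nu - \int u\,d\mu$ (for $u$ Lipschitz in the $d_a$ metric) by running the diffusion semigroup $(P_t)_{t\ge 0}$ from time $\tau$ down to time $T$ and exploiting that $\nu$ is $K$-invariant. Concretely, for a test function $u$ write $\int_E (P_\tau u - P_T u)\,d\nu = -\int_\tau^T \int_E \mathcal{L}_\mu P_t u \, d\nu \, dt$. Since $\nu$ is $K$-invariant, $\int_E (Kg - g)\,d\nu = 0$ for $g = P_t u$, and the key identity is that $\mathcal{L}_\mu$ is, up to the higher-order terms, the generator whose discretization is $\frac{1}{s}(K - \mathrm{Id})$; so $\int_E \mathcal{L}_\mu P_t u \, d\nu$ can be rewritten as $\int_E \big(\mathcal{L}_\mu - \frac{1}{s}(K-\mathrm{Id})\big) P_t u \, d\nu$. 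I then Taylor-expand $P_t u(y)$ around $x$ to all orders: the order-$1$ term pairs $\frac{1}{s}\int(y-x)K(x,dy)$ against $\nabla P_t u$, the order-$2$ term pairs $\frac{1}{2s}\int(y-x)^{\otimes 2}K(x,dy)$ against $\nabla^2 P_t u$, and matching these against $b(x)$ and $a(x)$ respectively produces the first four error terms in the statement; the order-$k$ terms for $k\ge 3$ produce the tail.

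The regularization estimates from Section~\ref{sec:regularization} (valid under $CD(\rho,\infty)$) are what make the interpolation quantitatively controllable: they bound $\|\nabla^k P_t u\|_{a}$ pointwise by roughly $f_k(t)\|\nabla u\|_a$ (this is exactly why the functions $f_k$ appear in Assumption~\ref{ass:mainnu3}), so that the $k$-th Taylor term is dominated by $\frac{f_k(t)}{k!}\|y-x\|^k_{a^{-1}(x)}\cdot\|\nabla u\|_\infty$-type quantities. Combining with Assumption~\ref{ass:mainnu3}, which guarantees the relevant sum is $\nu\otimes K$-integrable, lets me interchange sum and integral and, via Cauchy--Schwarz in $(x,y)$, bound each term by the corresponding moment-discrepancy integral of $K$ against $\nu$ times an explicit constant depending on $\int_\tau^T f_k(t)\,dt$. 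The explicit time integrals give the factors $\tau$, $\sqrt\tau$, $\log\tau$, and $1/\sqrt{k!\,\tau^{k-3}}$ appearing in the statement; the constant $C(T,\rho,d)$ absorbs the dependence on the upper cutoff $T$ and the curvature/dimension.

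The remaining two contributions are the endpoint corrections $\|P_\tau u - u\|$ and $\|P_T u - u\|$. For the first, smallness as $\tau\to 0$ gives the $\sqrt\tau$ term: since $\partial_t P_t u|_{t=0}=\mathcal{L}_\mu u$ and $\Gamma_1(u,u)=\|\nabla u\|^2_a\le 1$, one gets $|\int(P_\tau u - u)d(\nu-\mu)|\lesssim \sqrt\tau \, W_{2,a}(\nu,\mu)$-free bounds plus a $\tau\,(\int\|b\|^2_{a^{-1}}d\nu)^{1/2}$ contribution from the drift part of $\mathcal{L}_\mu u$ — these are the first two terms of the bound. For the second, Assumption~\ref{ass:main}\ref{ass:main-3} (exponential $W_{2,a}$-contraction) controls $|\int(P_T u - u)\,d\nu - \int(P_T u - u)\,d\mu| = |\int P_T u\,d\nu - \int u \, d\mu|$ by showing $\int P_T u\, d\nu$ is close to $\int u\,d\mu$ up to $c e^{-\kappa T} W_{2,a}(\nu,\mu)$; absorbing this small multiple of $W_{2,a}(\nu,\mu)$ into the left-hand side (for $T$ large, hidden in the choice of $C(T,\rho,d)$) closes the argument. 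Finally, taking the supremum over $1$-Lipschitz $u$ and invoking Kantorovich--Rubinstein duality for $W_{1,a}$, together with the standard bound $W_{1,a}\le W_{2,a}$ and a density/truncation argument to pass from smooth compactly supported $u$ to general Lipschitz $u$, yields the claimed inequality. The main obstacle I anticipate is making the termwise Taylor estimates rigorous: justifying the interchange of the infinite Taylor sum with the $\nu\otimes K$-integral and the $t$-integral, controlling the Taylor remainder uniformly on the segment $[x,y]\subset E$ (using Assumption~\ref{ass:mainnu2}), and verifying that the higher-order gradient bounds $\|\nabla^k P_t u\|_a\lesssim f_k(t)$ hold with exactly the constants that make the series in Assumption~\ref{ass:mainnu3} the right object — this is where the $CD(\rho,\infty)$ regularization theory of Section~\ref{sec:regularization} must be pushed to all orders $k$.
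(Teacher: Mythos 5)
Your proposal pursues a test-function / Kantorovich--Rubinstein-duality route: fix a $1$-Lipschitz $u$, interpolate $u\mapsto P_t u$, use $K$-invariance of $\nu$ to replace $\mathcal{L}_\mu$ by $\mathcal{L}_\mu - \frac{1}{\step}(K-\mathrm{Id})$, Taylor-expand, and control each term by the regularization estimates of Proposition~\ref{pro:curvdim}. The Taylor-expansion and moment-matching mechanism is essentially the same as the paper's, but the outer framework is not, and this matters: the supremum over $1$-Lipschitz functions gives $W_{1,a}(\nu,\mu)$, and you then invoke the ``standard bound $W_{1,a}\le W_{2,a}$'' to conclude. That inequality points the wrong way --- a bound on $W_{1,a}$ says nothing about $W_{2,a}$. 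As written, your argument would establish only a $W_{1,a}$ analogue of the theorem, not the stated $W_{2,a}$ bound.

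The paper's proof is structured differently precisely to obtain order-$2$ Wasserstein control. Instead of dualizing against test functions, it interpolates the \emph{measure}: setting $\nu_t$ to have density $P_t h$ against $\mu$, Lemma~\ref{lem:interp} invokes the Otto--Villani-type displacement estimate $\frac{d^+}{dt}W_{2,a}(\nu,\nu_t)\le I_\mu(\nu_t)^{1/2}$ (from \cite{WangOV}) and the exponential convergence in \ref{ass:main-3} to reduce $W_{2,a}(\nu,\mu)$ to $\int_0^T I_\mu(\nu_t)^{1/2}\,dt$. Writing $I_\mu(\nu_t)=-\int \mathcal{L}_\mu P_t v_t\,d\nu=\int(\mathcal{L}_\nu-\mathcal{L}_\mu)P_t v_t\,d\nu$ with $v_t=\log P_t h$, the Taylor expansion proceeds as you describe, but the regularization bound is used in the sharper form $\|\nabla^k P_t v_t\|_a\le f_k(t)\sqrt{P_t\|\nabla v_t\|^2_a}$, and then Cauchy--Schwarz yields $I_\mu(\nu_t)\le\bigl(\int S(x,t)^2\,d\nu\bigr)^{1/2} I_\mu(\nu_t)^{1/2}$ because $\int P_t\|\nabla v_t\|^2_a\,d\nu=I_\mu(\nu_t)$. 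This cancellation of $I_\mu(\nu_t)^{1/2}$ is what produces a genuine $W_2$-type bound, and it has no counterpart in the test-function setting, where $\|\nabla u\|_\infty\le1$ is a pointwise rather than $L^2(\nu)$-averaged quantity. To repair your proposal you would need to switch to this Fisher-information/entropy-flow interpolation (or find another mechanism giving $W_2$ directly); Kantorovich--Rubinstein duality alone cannot close the gap between $W_{1,a}$ and $W_{2,a}$.
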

Let us remark that the quantities appearing in our bound are natural as they correspond to a quantification of the usual generator convergence condition appearing in standard diffusion approximation results, see e.g. Corollary 4.2 \citep{ethierkurtz}. 

\subsection{Proof}
Suppose Assumptions~\ref{ass:main} and \ref{ass:mainnu} are verified.
Furthermore, let us assume for now that the measure $\nu$ admits a density $h$ with respect to $\mu$ such that $h = \epsilon + f$ for some $\epsilon > 0$ and $f \in C^\infty_c(E,\RR^+)$ and that $K(x,.)$ has bounded support for all $x \in E$. These assumptions can be later lifted thanks to an approximation argument detailed in Section~\ref{sec:approx}. 

\subsubsection{Diffusion interpolation}
For any $t > 0$, let $\nu_t$ be the measure with density $P_t h$. Since  $\nu_t$ converges to $\mu$ as $t$ grows to infinity, we can bound the distance between $\nu$ and $\mu$ by controlling the infinitesimal displacements of $\nu_t$. This idea is formalized in the following result.

\begin{lemma}
\label{lem:interp}
Suppose Assumption~\ref{ass:main} is verified and that the measure $\nu$ admits a density $h$ with respect to $\mu$ such that $h = \epsilon + f$ for some $\epsilon > 0$ and $f \in C^\infty_c(E,\RR^+)$. Then, for any $T > 0$, 
\[
(1 - c e^{- \kappa T}) W_{2,a}(\nu,\mu) \leq \int_0^T \left( \int_E (-\mathcal{L}_\mu) P_t v_t \, d\nu(x) \right)^{1/2} dt,
\]
where $v_t = \log P_t h$.
\end{lemma}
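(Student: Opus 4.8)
The plan is to use the diffusion semigroup to interpolate between $\nu$ and $\mu$ along the curve $t \mapsto \nu_t$ (the measure with density $P_t h$), and to estimate the speed of this curve in Wasserstein metric by the infinitesimal displacement formula. First I would observe that, under the stated regularity on $h$ (bounded away from $0$, smooth compactly supported perturbation), the density $P_t h$ stays smooth and strictly positive, so $v_t = \log P_t h$ is a legitimate smooth function and all the objects below are well-defined. The key classical fact I would invoke is the Benamou--Brenier / Otto-calculus identity: the metric derivative of $t \mapsto \nu_t$ with respect to $W_{2,a}$ is controlled by
\[
\limsup_{\delta \to 0} \frac{W_{2,a}(\nu_{t+\delta},\nu_t)}{\delta} \leq \left( \int_E \Gamma_1(v_t, v_t) \, d\nu_t \right)^{1/2},
\]
because $\nu_t$ solves the continuity equation $\partial_t \nu_t = \mathcal{L}_\mu^* \nu_t$, which in the Markov-triple/weighted setting reads $\partial_t (P_t h) = \mathcal{L}_\mu(P_t h) = \mathrm{div}_\mu(P_t h \, \nabla v_t)$ (using $\mathcal{L}_\mu \phi = \mathrm{div}_\mu \nabla \phi$ and $\nabla P_t h = P_t h \, \nabla v_t$), i.e. $\nu_t$ moves with velocity field $\nabla v_t$ measured in the $a^{-1}$ metric. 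Integrating this speed bound along $[0,T]$ and using the triangle inequality gives
\[
W_{2,a}(\nu_0, \nu_T) \leq \int_0^T \left( \int_E \Gamma_1(v_t,v_t)\, d\nu_t \right)^{1/2} dt.
\]

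Next I would rewrite the integrand. Since $d\nu_t = P_t h \, d\mu$ and $\Gamma_1(v_t,v_t) = \|\nabla v_t\|_a^2 = \Gamma_1(\log P_t h, \log P_t h)$, integration by parts against $\mu$ (the reversibility/symmetry of $\mathcal L_\mu$) gives the standard identity
\[
\int_E \Gamma_1(v_t, v_t)\, d\nu_t = \int_E \Gamma_1(\log P_t h, P_t h)\, d\mu = \int_E (-\mathcal{L}_\mu \log P_t h)\, P_t h\, d\mu = \int_E (-\mathcal{L}_\mu v_t)\, d\nu_t.
\]
To match the statement exactly — which has $\int_E (-\mathcal{L}_\mu) P_t v_t \, d\nu$, an integral against $\nu = \nu_0$ rather than $\nu_t$ — I would use the semigroup self-adjointness once more: for a smooth enough function $g$, $\int_E g \, d\nu_t = \int_E g \, (P_t h)\, d\mu = \int_E (P_t g)\, h\, d\mu = \int_E P_t g \, d\nu$. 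Applying this with $g = (-\mathcal{L}_\mu) v_t$ and commuting $P_t$ with $\mathcal{L}_\mu$ yields $\int_E (-\mathcal L_\mu) v_t \, d\nu_t = \int_E (-\mathcal L_\mu) P_t v_t \, d\nu$, which is the claimed integrand.

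Finally I would close the estimate using Assumption~\ref{ass:main}\ref{ass:main-3}. We have $W_{2,a}(\nu_0,\nu_T) \geq W_{2,a}(\nu,\mu) - W_{2,a}(\nu_T,\mu) \geq W_{2,a}(\nu,\mu) - c e^{-\kappa T} W_{2,a}(\nu,\mu) = (1 - c e^{-\kappa T}) W_{2,a}(\nu,\mu)$, where the exponential contraction is exactly the hypothesis applied to $\eta = \nu$ (legitimate since $d_a(\cdot,0) \in L_2(\nu)$ by Assumption~\ref{ass:main}\ref{ass:main-3} and $d\nu = h\,d\mu$). Combining the three displays gives the lemma.

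The main obstacle I anticipate is making the Otto-calculus speed bound rigorous in this non-compact, weighted, possibly-degenerate-at-the-boundary setting: one must justify that $\nu_t$ is absolutely continuous as a curve in $(\mathcal P_2, W_{2,a})$ with the stated metric derivative, that no mass escapes to the boundary of $E$ or to infinity (here the compact-support/strict-positivity assumptions on $h$ and the $L_2$ integrability of $d_a(\cdot,0)$ are doing the work), and that the various integrations by parts against $\mu$ have no boundary terms. A clean way to sidestep delicate PDE regularity is to prove the speed bound at the level of test functions: for any $1$-Lipschitz (w.r.t. $d_a$) function $\psi$, differentiate $t \mapsto \int \psi \, d\nu_t = \int \psi\, P_t h \, d\mu$, bound $\frac{d}{dt}\int \psi\,d\nu_t = \int \Gamma_1(\psi, v_t)\, d\nu_t$ by Cauchy--Schwarz and $\Gamma_1(\psi,\psi)\le 1$, then integrate in $t$ and take the supremum over $\psi$ via Kantorovich--Rubinstein; the $W_2$ (rather than $W_1$) statement then follows from the refined Benamou--Brenier characterization, or one simply carries the $W_2$ version of this argument throughout.
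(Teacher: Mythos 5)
Your argument is essentially the same as the paper's: use the exponential contraction from Assumption~\ref{ass:main}\ref{ass:main-3} together with the triangle inequality to reduce to bounding $W_{2,a}(\nu,\nu_T)$, bound the speed of the curve $t\mapsto\nu_t$ by the square root of the relative Fisher information $I_\mu(\nu_t)=\int\Gamma_1(v_t,v_t)\,d\nu_t$, and then rewrite $I_\mu(\nu_t)$ via reversibility of $(P_t)$ as $\int(-\mathcal{L}_\mu)P_t v_t\,d\nu$. The one place you flag as delicate — making the Otto-calculus speed bound rigorous in the weighted, possibly non-compact setting — is exactly what the paper resolves by citing Equation~(3.4) of \cite{WangOV}, which directly gives $\tfrac{d^+}{dt}W_{2,a}(\nu,\nu_t)\le I_\mu(\nu_t)^{1/2}$; the paper also supplies the small integrability check (using $h=\epsilon+f$ to get $|\mathcal{L}_\mu P_t h|\le C(\|b\|_{a^{-1}}+1)\in L_1(\mu)$) that justifies the integration by parts, which your sketch glosses over but would need in the same way.
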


\begin{proof}
Let $T > 0$. By \ref{ass:main-3}, we have 
\begin{align*}
W_{2,a}(\nu,\mu) & \leq W_{2,a}(\nu, \nu_T) + W_{2,a}(\nu_T, \mu) \\
& \leq  W_{2,a}(\nu, \nu_T) + c e^{-\kappa T} W_{2,a}(\nu, \mu),
\end{align*}
from which we obtain
\beq
\label{eq:expoconv}
(1 - c e^{- \kappa T}) W_{2,a}(\nu,\mu) \leq W_{2,a}(\nu, \nu_T).
\eeq
It it thus sufficient to bound $W_{2,a}(\nu, \nu_T)$ in order to bound $W_{2,a}(\nu,\mu)$.
Such a bound can be derived from the following estimate, obtained in Equation (3.4) \cite{WangOV}, 
\beq
\label{eq:OV}
\forall t > 0, \frac{d^+}{dt} W_{2,a}(\nu, \nu_t) \leq \left(\int_E \left< \nabla \log P_t h(x), \nabla P_t h(x) \right>_{a(x)} d \mu(x) \right)^{1/2} = I_\mu(\nu_t)^{1/2},
\eeq
where $I_\mu(\nu_t)$ is the Fisher information of the measure $\nu_t$ with respect to $\mu$. 
Let $t > 0$ and $v_t = \log P_t h$. We have
\begin{align*}
P_t h \mathcal{L}_\mu v_t & = P_t h( b.\nabla v_t + <a, \nabla^2 v_t>) \\
& = b . \nabla P_t h + \left<a, \nabla^2 P_t h - \frac{(\nabla P_t h)^{\otimes 2}}{P_t h} \right> \\
& = \mathcal{L}_\mu P_t h - \frac{\|\nabla P_t h\|_{a}^2}{P_t h} \\
& = \mathcal{L}_\mu P_t h - \left<\nabla v_t, \nabla P_t h\right>_a.
\end{align*}
Since $h = \epsilon + f$ with $f \in C^\infty_c$, $P_t h$ and $\| \nabla v_t \|_a$ are bounded. Thus, there exists $C > 0$ such that $|\mathcal{L}_\mu P_t h| \leq C (\|b\|_{a^{-1}} + 1)$. By  \ref{ass:main-1}, $\|b\|_{a^{-1}} \in L_1(\mu)$, thus
$\mathcal{L}_\mu P_t h \in L_1(\mu)$. Moreover,
$\mu$ is an invariant measure of the operator $\mathcal{L}_\mu$, which means that 
\[
\int_{E} \mathcal{L}_\mu P_t h (x) \, d\mu(x) = 0.
\]
Therefore,
\[
I_\mu(\nu_t) = -\int_{E} P_t h(x) \mathcal{L}_\mu v_t(x) \, d\mu(x).
\]
Then, by the symmetry of $(P_t)_{t \geq 0}$ with respect to the measure $\mu$, we have
\[
I_\mu(\nu_t)= - \int_{E} h(x) P_t \mathcal{L}_\mu v_t(x) \, d\mu(x) = - \int_{E} P_t \mathcal{L}_\mu v_t(x) \, d\nu(x).
\]
Finally, since $\mathcal{L}_\mu$ is the infinitesimal generator of the semigroup $(P_t)_{t \geq 0}$, we can permute $P_t$ and $\mathcal{L}_\mu$ to obtain 
\[
I_\mu(\nu_t) = -\int_{E} \mathcal{L}_\mu P_t v_t(x) \, d\nu(x),
\]
concluding the proof of Lemma~\ref{lem:interp}. 
\end{proof}

\subsubsection{Generator comparison}

Let $\step > 0$ be a rescaling factor and let $\mathcal{L}_\nu$ be the rescaled generator associated with the Markov kernel $K$, defined for any bounded function $\phi$ by
\[
\forall x \in E, \mathcal{L}_\nu \test (x) = \frac{1}{\step}  \int_E (\test(y) - \test(x)) K(x, dy)
\]
By \ref{ass:mainnu1}, we have 
\[
\int_E \mathcal{L}_\nu \test (x) \, d\nu(x) = 0.  
\]
Hence, by Lemma~\ref{lem:interp} we have
\[
(1 - c e^{- \kappa T}) W_{2,a}(\nu,\mu) \leq \int_0^T I_\mu(\nu_t)^{1/2} \, dt,
\]
with
\[
I_\mu(\nu_t) = \int_E (-\mathcal{L}_\mu) P_t v_t(x) \, d\nu(x) = \int_E (\mathcal{L}_\nu-\mathcal{L}_\mu) P_t v_t(x) \, d\nu(x).
\]
Let $x \in E$. By Lemma~\ref{lem:analytic}, we have that $P_t v_t$ is real analytic on $E$. Thus, by \ref{ass:mainnu2} and since $K(x,.)$ has bounded support, 
\[
\mathcal{L}_\nu P_t v_t (x) = \sum_{k=1}^\infty \frac{1}{\step k!} \left<\int_E (y - x)^{\otimes k} K(x, dy), \nabla^k P_t v_t(x)\right>.
\]
Therefore,
\begin{align*}
(\mathcal{L}_\nu-\mathcal{L}_\mu) P_t v_t(x) = &  \left<\frac{1}{\step} \int_E (y - x) K(x, dy) - b(x), \nabla P_t v_t(x)\right>  \\
& + \left<\frac{1}{2 \step} \int_E (y - x)^{\otimes 2} K(x, dy) - a(x), \nabla^2 P_t v_t(x)\right> \\
& + \sum_{k=3}^\infty \frac{1}{\step k!} \left<\int_E (y - x)^{\otimes k} K(x, dy), \nabla^k P_t v_t(x)\right>
\end{align*}
and applying Cauchy-Schwarz inquality yields 
\begin{align*}
(\mathcal{L}_\nu-\mathcal{L}_\mu) P_t v_t(x) \leq &  \left\|\frac{1}{\step} \int_E (y - x) K(x, dy) - b(x)\right\|_{a^{-1}(x)} \|\nabla P_t v_t(x)\|_{a(x)} \\
& + \left\|\frac{1}{2\step} \int_E (y - x)^{\otimes 2} K(x, dy) - a(x)\right\|_{a^{-1}(x)}  \left\|\nabla^2 P_t v_t(x)\right\|_{a(x)} \\
& + \sum_{k=3}^\infty \frac{1}{\step k!}\left\|\int (y - x)^{\otimes k} K(x, dy)\right\|_{a^{-1}(x)} \left\| \nabla^k P_t v_t(x)\right\|_{a(x)} .
\end{align*}
From here, using Proposition~\ref{pro:curvdim} gives
\[
(\mathcal{L}_\nu-\mathcal{L}_\mu) P_t v_t(x)  \leq S(x,t) \left(P_t \|\nabla v_t(x)\|^2_{a(x)}\right)^{1/2},
\]
where
\begin{align}
\label{eq:Sdef}
\begin{split}
S(x,t) = & f_1(t) \left\|\frac{1}{\step}\int_E (y - x) K(x, dy) - b(x)\right\|_{a^{-1}(x)}  \\
& + f_2(t) \left\|\frac{1}{2\step} \int_E (y - x)^{\otimes 2} K(x, dy) - a(x)\right\|_{a^{-1}(x)}  \\
& + \sum_{k=3}^\infty \frac{f_k(t)}{\step k!} \left\|\int (y - x)^{\otimes k} K(x,dy)\right\|_{a^{-1}(x)}
\end{split}
\end{align}
and 
\[
f_k(t) = \begin{cases} 
 e^{- \rho t \max(1,k/2)} \left( \frac{\rho d}{e^{2\rho t/(k-1)}-1} \right)^{(k-1)/2}  \textit{ if } \rho \neq 0 \\
\left(\frac{d (k-1)}{2t}\right)^{(k-1)/2} \textit{ if } \rho = 0 
\end{cases}.
\]
Then, using Cauchy-Schwarz inequality, we obtain
\[
I_\mu(\nu_t) = \int_E (\mathcal{L}_\nu-\mathcal{L}_\mu) P_t v_t(x) \, d\nu(x) \leq \left(\int_E S(x,t)^2 \, d\nu(x) \right)^{1/2} \left(\int_E P_t \| \nabla  v_t(x)\|^2_{a(x)} \, d\nu(x) \right)^{1/2}
\]
and, since 
\[
\int_E P_t \| \nabla  v_t(x)\|^2_{a(x)} \, d\nu(x) = I_\mu(\nu_t),
\]
we have 
\[
I_\mu(\nu_t) \leq \left(\int_E S(x,t)^2 \, d\nu(x) \right)^{1/2} I_\mu(\nu_t)^{1/2}.
\]
Therefore,
\[
I_\mu(\nu_t)^{1/2} \leq \left(\int_E S(x,t)^2 \, d\nu(x)\right)^{1/2}.
\]
Now, using crude bounds on the functions $f_k$ derived in (\ref{eq:fkbound}), we can integrate the right-hand term of the previous equation between $\tau$ and $T$ to obtain 
\begin{align*}
\frac{\int_\tau^T  I_\mu(\nu_t)^{1/2} \, dt}{C(T,\rho,d)}  \leq
&    \left(\int_E \left\|\frac{1}{\step} \int_E (y-x) K(x,dy)  - b(x) \right\|^2_{a^{-1} (x)} d\nu(x) \right)^{1/2}  \\
 & +  \left(\int_E \left\| \frac{1}{2 \step} \int_{E} (y-x)^{\otimes2} K(x,dy)   - a(x) \right\|^2_{a^{-1}(x)} d\nu(x) \right)^{1/2}\\
 & + \frac{\log(\tau)}{\step} \left(\int_E \left\|  \int_{E} (y-x)^{\otimes 3} K(x,dy)\right\|^2_{a^{-1}(x)} d\nu(x) \right)^{1/2} \\
 & + \sum_{k=4}^\infty  \frac{C(T,\rho,d)^{k-1}}{\step \sqrt{\tau^{k-3}  k!}} \left( \left\|  \int_{E} (y-x)^{\otimes k} K(x,dy)\right\|^2_{a^{-1}(x)} d\nu(x)\right)^{1/2},
\end{align*}
with $C(T,\rho,d) > 0$.
Then, performing the same computations with $\mathcal{L}_\nu = 0$, we obtain 
\[
I_\mu(\nu_t)^{1/2} \leq  f_1(t) \left(\int_E \left\| b(x)\right\|^2_{a^{-1}(x)}d\nu(x)\right)^{1/2} + f_2(t) \sqrt{d}
\]
and integrating this bound for $t \in [0,\tau]$ gives 
\[
\int_0^\tau  I_\mu(\nu_t)^{1/2} dt \leq  C(\rho, d) \left( \tau \left(\int_E  \|b(x)\|^2_{a^{-1}(x)} d\nu(x)\right)^{1/2} +  \sqrt{\tau}\right),
\]
where $C(\rho, d)$ is a strictly positive constant. Combining both bounds to integrate $I_\mu(\nu_t)^{1/2}$ between $0$ and $T$ concludes the proof of Theorem~\ref{thm:main}.

\section{Regularization properties of Markov semigroups under a curvature-dimension inequality}
\label{sec:regularization}

Our objective in this Section is to provide bounds on $\|\nabla^k P_t \test\|_{a}$ for $\test \in C^\infty(E, \RR)$ and $k \in \NN^\star$. First, since we assume there exists $\rho \in \RR$ such that 
\beq
\label{eq:CDcondition}
\forall \test \in C^\infty_c(E, \RR), \Gamma_2(\test,\test) \geq \rho \Gamma_1(\test,\test),
\eeq
we know, by Theorem 3.2.3 \citep{Markov}, that 
$(P_t)_{t \geq 0}$ verifies the following gradient bound 
\[
\forall \test \in C^\infty_c(E,\RR), \|\nabla P_t \test\|_a \leq e^{- \rho t} P_t  \|\nabla \test\|_a.
\]
A bound on $\|\nabla^2 P_t v_t\|_a$ is obtained in a similar way in the course of the proof of Theorem 4.1 in \citep{Stein}. More precisely, by making use of the Gamma operators $(\Gamma_k)_{k \geq 1}$ which are defined recursively from the $\Gamma_1$ operator by 
\[
\forall k > 1, f,g \in C^\infty(E, \RR), \Gamma_{k}(f,g) = 
\frac{1}{2} \left[ \mathcal{L}_\mu (\Gamma_{k-1}(f,g)) - \Gamma_{k-1}(\mathcal{L}_\mu f, g) - \Gamma_{k-1}( f, \mathcal{L}_\mu g) \right],
\]
one can show that, if there exists $\kappa, \sigma > 0$ such that, for any $\test \in C^\infty_c(E, \RR)$, $\Gamma_3 (\test, \test)\geq \kappa \Gamma_2(\test, \test)$ and $\Gamma_2 (\test, \test) \geq \sigma \Gamma_1(\phi, \phi)$, then 
\[
\forall \test \in C^\infty_c(E, \RR), \|\nabla^2 P_t \test\|_a^2 \leq \frac{\kappa}{\sigma(e^{\kappa t}-1)}  P_t \|\nabla \test\|^2_a.
\]
However, such assumptions are usually hard to check in practice. For instance, let us consider a simple one-dimensional example for which $\mathcal{L}_\mu \test = -u' \test' + \test''$. 
In this case, (\ref{eq:CDcondition}) is verified as long as $u'' \geq \rho$. On the other hand, following the computations of Section 4.4 \citep{Stein}, in order to have $\Gamma_3(\test,\test) \geq 3 c \Gamma_2(\test,\test)$ and $\Gamma_2(\test,\test) \geq c \|\test''\|_a$ for some $c > 0$, one requires
\[
 u^{(4)} - u' u^{(3)} + 2 (u'')^2 - 6 c u'' \geq 0
 \]
 and 
 \[
 3 (u^{(3)})^2 \leq 2(u'' -c ) (u^{(4)} - u' u^{(3)} + 2 (u'')^2 - 6 c u'' ).
 \]
Hence, even in this rather simple case, one requires strong assumptions in order to bound $\|\nabla^2 P_t \test\|_a$ and bounding $\|\nabla^k P_t \test\|_a$ for $k > 2$ in a similar manner would require even stronger assumptions. 
Instead, we rely on the following result which provides bounds on $\|\nabla^k P_t \test\|_a$ under a simple curvature-dimension condition. 
\begin{proposition}
\label{pro:curvdim} 
Let $t > 0$ and suppose \ref{ass:main-1} and \ref{ass:main-2} are verified. Then, for any bounded function $\test \in C^\infty(E, \RR)$ such that $\|\nabla \test\|_a$ is bounded, we have 
\[
\forall k > 0, t > 0, \|\nabla^k P_t \test\|_a \leq f_k(t) \sqrt{P_t \|\nabla \test\|^2_a}, 
\]
where 
\[
f_k(t) = \begin{cases} 
 e^{- \rho t \max(1,k/2)} \left( \frac{\rho d}{e^{2\rho t/(k-1)}-1} \right)^{(k-1)/2}  \textit{ if } \rho \neq 0 \\
\left(\frac{d (k-1)}{2 t}\right)^{(k-1)/2} \textit{ if } \rho = 0 
\end{cases}.
\]
% Moreover, 
% \[
% \forall k > 0, t > 0, \|\nabla^k P_t \test\|_a \leq f_k(t/2) e^{- \rho t /2} \sqrt{P_t |\test|}.
% \]
\end{proposition}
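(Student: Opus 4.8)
The plan is to prove the bound on $\|\nabla^k P_t \test\|_a$ by an interpolation argument along the semigroup combined with the gradient bound $\|\nabla P_t \test\|_a \leq e^{-\rho t} P_t \|\nabla \test\|_a$ and the reverse (local) Poincaré-type inequality that the $CD(\rho,\infty)$ condition yields. The key observation is that for fixed $s < t$ we can write $P_t \test = P_s (P_{t-s}\test)$, and then apply a one-step regularization estimate that trades one order of differentiation for a factor roughly $\bigl(\rho d/(e^{2\rho s/(k-1)}-1)\bigr)^{1/2}$ times $e^{-\rho s}$, applied to the already-smoothed function $P_{t-s}\test$. Iterating this $k-1$ times and optimizing the split of the time interval $t$ into $k-1$ equal pieces of length $t/(k-1)$ (which is why the exponent $2\rho t/(k-1)$ appears) should produce exactly $f_k(t)$.

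Concretely, I would proceed as follows. First I would establish the base case $k=1$, which is precisely Theorem 3.2.3 of \cite{Markov}: $\|\nabla P_t \test\|_a \le e^{-\rho t} P_t \|\nabla \test\|_a \le e^{-\rho t} (P_t \|\nabla \test\|_a^2)^{1/2}$ by Jensen, matching $f_1(t) = e^{-\rho t}$ (here the dimensional term is absent since $(k-1)/2 = 0$). Second, I would prove the crucial one-step estimate: for a smooth bounded $g$ with bounded $\|\nabla g\|_a$ (and, after the first step, bounded higher gradients, which the previous step guarantees via the semigroup smoothing and the compact-support/analyticity setup of the ambient proof),
\[
\|\nabla (P_s g)\|_a^2 \;\text{in an appropriate tensorial sense} \;\le\; \Bigl(\tfrac{\rho d}{e^{2\rho s}-1}\Bigr)\, e^{-2\rho s}\, P_s\bigl(\,?\,\bigr) + e^{-2\rho s} \,(\text{lower order}),
\]
the point being the local inequality $\Gamma_1(P_s g) \le \frac{\rho}{e^{2\rho s}-1}\bigl(P_s(g^2) - (P_s g)^2\bigr)$ from $CD(\rho,\infty)$ combined with the dimension $d$ entering through the number of coordinate directions when one passes from scalar gradient bounds to the Hilbert–Schmidt norm of the $k$-th gradient tensor. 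Third, I would set up the induction on $k$: assuming the bound for $k-1$ applied to $P_{t/(k-1)}\test$ on a time interval of length $t - t/(k-1)$, apply the one-step estimate on the last interval of length $t/(k-1)$, and collect the exponential factors $e^{-\rho t \max(1,k/2)}$ — the $\max(1,k/2)$ arising because for $k\ge 2$ each of the $k-1$ steps contributes $e^{-\rho t/(k-1)}$ to a total $e^{-\rho t}$, plus an extra $e^{-\rho t (k-2)/(2(k-1))}$-type term from the $(e^{2\rho s/(k-1)}-1)^{-1/2}$ factors, which should combine to $e^{-\rho t k/2}$. Finally I would treat $\rho = 0$ as the limiting case $\rho \to 0$ of the formula, where $\frac{\rho d}{e^{2\rho s/(k-1)}-1} \to \frac{d(k-1)}{2s}$, or redo the computation directly using the $CD(0,\infty)$ local inequality $\Gamma_1(P_s g) \le \frac{1}{2s}(P_s(g^2)-(P_s g)^2)$.

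The main obstacle I anticipate is making the tensorial bookkeeping rigorous: the statement is about $\|\nabla^k P_t \test\|_a$, the Hilbert–Schmidt norm (with respect to $a$) of a full order-$k$ tensor, whereas the $CD(\rho,\infty)$ machinery of \cite{Markov} is naturally phrased for scalar quantities $\Gamma_1, \Gamma_2$. One must control how differentiating $\mathcal{L}_\mu$-commutators generates all the mixed partials and how the metric $a$ (which varies over $E$) interacts with repeated differentiation — this is presumably where the factor $d$ (rather than $1$) is unavoidable and where the $e^{2\rho t/(k-1)}$ rather than $e^{2\rho t}$ scaling is locked in. A secondary technical point is justifying all the manipulations (differentiation under $P_t$, boundedness of intermediate gradients) in the non-compact setting $E$; here I would lean on the reduction already made in the main proof to $h = \epsilon + f$ with $f \in C^\infty_c$ and $K(x,\cdot)$ compactly supported, together with Lemma~\ref{lem:analytic} giving analyticity of $P_t v_t$, so that all the infinite sums and term-by-term gradient bounds are legitimate.
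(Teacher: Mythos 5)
Your proposal captures several of the right ingredients — the induction on $k$, the semigroup split $P_t = P_s P_{t-s}$, the reverse local Poincar\'e inequality from $CD(\rho,\infty)$ (Theorem~4.7.2 of~\cite{Markov}), the coordinate sum as the source of the factor $d$, and the base case $k=1$ via Theorem~3.2.3. But there is a genuine gap: the mechanism by which the paper closes the induction step is entirely absent from your sketch, and your incomplete ``one-step estimate'' (with the placeholder ``$?$'') is exactly the place where it is needed. The reverse Poincar\'e inequality only says $\Gamma_1(P_s g) \le \frac{\rho}{e^{2\rho s}-1}\bigl(P_s g^2 - (P_s g)^2\bigr)$; it converts a \emph{first} gradient of $P_s g$ into a zeroth-order quantity in $g$, and there is no analogue that converts $\|\nabla^{k+1} P_s g\|_a$ into $\|\nabla^{k} g\|_a$. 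So simply iterating it across $k-1$ equal time slices, as you propose, has no way to peel off gradients one at a time — you would only ever be able to bound the first gradient.

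What the paper actually does, and what your sketch misses, is the coupling/directional-derivative trick that turns a derivative of $P_t\test$ into $P_t$ applied to something. One writes each component of $\nabla^{k+1} P_t\test(x)$ as $\nabla^k$ of the directional derivative $\lim_{\epsilon\to 0}\frac{1}{\epsilon}\bigl(P_t\test(x+\epsilon a(x) e_i) - P_t\test(x)\bigr)$, introduces two coupled diffusions $X_t, \tilde X^\epsilon_t$ started at $x$ and $x+\epsilon a(x) e_i$, and sets $\psi_\epsilon(y) = \EE[\test(\tilde X^\epsilon_t)\mid X_t = y]$ so that $P_t\test(x+\epsilon a(x) e_i) - P_t\test(x) = P_t(\psi_\epsilon-\test)(x)$. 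Now the object inside $\nabla^k$ has the form $P_t(\cdot)$, so the induction hypothesis (applied to $P_{t(k-1)/k}$) and the reverse Poincar\'e (on the remaining $P_{t/k}$) give a bound in terms of $P_t|\psi_\epsilon-\test|^2$, and the $CD(\rho,\infty)$ coupling contraction $d_a(\tilde X^\epsilon_t, X_t)\le e^{-\rho t} d_a(\tilde X^\epsilon_0, X_0)$ a.s.\ (Theorem~3.2.4 of~\cite{Markov} together with~\cite{duality}) is what makes $P_t|\psi_\epsilon-\test|^2/\epsilon^2 \to e^{-2\rho t} P_t\|\nabla\test\|_a^2$ via dominated convergence. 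Without this step, your induction cannot close, and the ``tensorial bookkeeping'' you flag as the main obstacle is in fact the smaller issue — the tensors are handled simply by choosing an $a(x)$-orthonormal basis at each $x$; the genuine hard point is the coupling identity you did not supply.
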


\begin{remark}
\label{rem:suboptimal}
The bounds we obtain are not dimension-independent as one could expect from the equivalent result for the Ornstein-Uhlenbeck semigroup obtained in (17) \citep{Bonis}. We believe this dependency to be an artifact of the proof.
\end{remark}

Such a result implies strong regularity for $P_t \test$. In particular, it must be real analytic on $E$. 
\begin{lemma}
\label{lem:analytic}
Let $\test \in C^\infty(E, \RR)$ be a bounded function and such that $\|\nabla \test\|_a$ is bounded. Then, under \ref{ass:main-1} and \ref{ass:main-2}, $P_t \test$ is real analytic on $E$ for any $t > 0$. Thus, for any $t > 0$ and any $x,y \in E$ such that $[x,y] \in E$, we have 
\[
P_t \test(y) - P_t \test(x) = \sum_{k=1}^\infty \frac{1}{k!} \left<(y-x)^{\otimes k} , \nabla^k P_t \test(x)\right>. 
\]
\end{lemma}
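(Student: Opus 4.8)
The plan is to derive real analyticity of $P_t\test$ from the quantitative derivative bounds in Proposition~\ref{pro:curvdim}, and then to justify the Taylor expansion along segments $[x,y]\subset E$. First I would fix $t>0$ and a point $x_0\in E$, and choose a radius $\radius>0$ small enough that the closed Euclidean ball $\bar B(x_0,\radius)$ is contained in $E$. On this ball the metric $a$ is comparable to the Euclidean metric: by compactness and continuity of $a$ there are constants $0<\lambda\le\Lambda<\infty$ with $\lambda I_d\preceq a(x)\preceq\Lambda I_d$ for all $x\in\bar B(x_0,\radius)$. This lets me convert the $\|\cdot\|_a$-bound on $\nabla^k P_t\test$ into an ordinary Hilbert--Schmidt bound: $\|\nabla^k P_t\test(x)\|\le\lambda^{-k/2}\|\nabla^k P_t\test(x)\|_{a(x)}\le\lambda^{-k/2}f_k(t)\sqrt{P_t\|\nabla\test\|^2_a(x)}$, and since $\|\nabla\test\|_a$ is bounded by hypothesis, $P_t\|\nabla\test\|^2_a$ is bounded by some constant $M$ uniformly on $\bar B(x_0,\radius)$.

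Next I would examine the growth of $f_k(t)$ in $k$ for fixed $t$. In the case $\rho=0$, $f_k(t)=\left(d(k-1)/(2t)\right)^{(k-1)/2}$, so $\|\nabla^kP_t\test(x)\|\lesssim\big(d(k-1)/(2\lambda t)\big)^{(k-1)/2}$. The key analytic point is the classical characterization: a smooth function $g$ on a neighborhood of $x_0$ is real analytic at $x_0$ if there are constants $C,R>0$ with $\|\nabla^k g(x_0)\|\le C\,k!/R^k$ for all $k$; this follows because then the Taylor series at $x_0$ dominates a convergent geometric series on $B(x_0,R)$. Using Stirling, $(k-1)^{(k-1)/2}$ is of order $\big((k-1)!\big)^{1/2}(k-1)^{\text{const}}e^{(k-1)/2}$, which is $o(k!/R^k)$ for any $R$; in particular $\sum_k\|\nabla^kP_t\test(x_0)\|\,\radius^k/k!<\infty$ for every $\radius$, so the Taylor series converges on all of $\RR^d$ and, since $x_0$ was arbitrary, $P_t\test$ is real analytic on $E$. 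When $\rho\ne0$ the prefactors $e^{-\rho t\max(1,k/2)}$ and $\big(\rho d/(e^{2\rho t/(k-1)}-1)\big)^{(k-1)/2}$ only improve (or at worst match up to a $t$-dependent constant) the growth rate, since $e^{2\rho t/(k-1)}-1\sim 2\rho t/(k-1)$ as $k\to\infty$; so the same estimate $f_k(t)=O\big((C_t(k-1))^{(k-1)/2}\big)$ holds and the argument goes through verbatim.

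Finally, for the Taylor-with-remainder identity on a segment $[x,y]\subset E$: since $[x,y]$ is a compact subset of the open set $E$, it has a compact neighborhood in $E$, on which the uniform bound $\|\nabla^kP_t\test(z)\|\le C(k!/R^k)$ holds with $R$ larger than the length $\|y-x\|$ (indeed $R$ can be taken arbitrarily large by the computation above). Applying the one-variable Taylor theorem to $\theta\mapsto P_t\test(x+\theta(y-x))$ on $[0,1]$, the order-$N$ remainder is bounded by $\sup_{\theta}\|\nabla^{N}P_t\test(x+\theta(y-x))\|\,\|y-x\|^N/N!\le C(\|y-x\|/R)^N\to0$, which yields the claimed convergent expansion $P_t\test(y)-P_t\test(x)=\sum_{k\ge1}\tfrac1{k!}\langle(y-x)^{\otimes k},\nabla^kP_t\test(x)\rangle$. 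The main obstacle is purely the bookkeeping in the $\rho\ne0$ case---checking that the $t$-dependent constants coming from $e^{2\rho t/(k-1)}-1$ and the $\max(1,k/2)$ exponent do not spoil the factorial comparison---but this is a routine asymptotic estimate rather than a conceptual difficulty; everything else is a direct application of Proposition~\ref{pro:curvdim} together with the standard sufficient condition for real analyticity.
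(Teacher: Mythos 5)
Your approach is essentially the same as the paper's: bound $f_k(t)$ by $C^k\sqrt{k!}/t^{(k-1)/2}$ via Stirling, combine with Proposition~\ref{pro:curvdim} to get a factorial-type bound on $\|\nabla^k P_t\test\|_a$ uniformly on compacts, and conclude analyticity. The paper packages the final step as a citation to Proposition~2.2.10 of Krantz--Parks, whereas you spell out the segment-wise Taylor remainder estimate explicitly; both amount to the same criterion.

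One sentence in your write-up is stated imprecisely and, taken literally, is false: ``a smooth function $g$ is real analytic at $x_0$ if $\|\nabla^k g(x_0)\|\le C\,k!/R^k$ for all $k$'' --- a bound on the derivatives \emph{at the single point} $x_0$ does not imply analyticity (the classical $e^{-1/x^2}$ example has all derivatives vanishing at $0$, so satisfies any such bound there, yet fails to be analytic at $0$). The correct criterion, which you in fact establish and use in your final paragraph, is that the bound $\|\nabla^k g\|\le C\,k!/R^k$ must hold \emph{uniformly on a neighborhood} of $x_0$; only then does the Lagrange remainder tend to zero. Since your computation does give a uniform bound on $\bar B(x_0,\radius)$ (via the spectral comparison $\lambda I_d\preceq a\preceq\Lambda I_d$ and the boundedness of $P_t\|\nabla\test\|_a^2$), and your remainder argument on the segment $[x,y]$ is the right one, the proof is sound; just be careful to state the criterion with the uniform neighborhood bound rather than the pointwise one.
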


\subsection{Proof of Proposition~\ref{pro:curvdim}}
\label{sec:curvdim}
% First, suppose the first claim of Proposition~\ref{pro:curvdim} is true. Then, by the Markov property of $P_t$ and by (\ref{eq:gradbound}), we have, for any $\test \in \mathcal{C}^\infty_c(E)$,
% \begin{align*}
% \|\nabla^k P_t \test\|_a & = \|\nabla^k P_{t/2} P_{t/2} \test\|_a \\
% & \leq f_k(t/2) \sqrt{P_{t/2} \|\nabla P_{t/2} \test\|^2_a} \\
% & \leq f_k(t/2) e^{-\rho t/2} \sqrt{P_t \test^2}
% \end{align*}

We are going to prove Proposition~\ref{pro:curvdim} by induction for the case $\rho \neq 0$, the case $\rho = 0$ can be obtained in a similar manner. First, by Theorem 3.2.3 \citep{Markov}, the result is true for $k = 1$. 
Now, let $k \geq 1$ and suppose that
\[
\forall t > 0,  \|\nabla^k P_t \test\|_a \leq e^{-\rho t \max(1, k/2)} \left( \frac{\rho d}{e^{(2 \rho t)/(k-1)} - 1}\right)^{(k-1)/2} (P_t \|\nabla \test\|_a^2)^{1/2}.
\]
Let $x \in \RR^d$ and 
let $(e_1,\dots,e_d)$ be an orthonormal basis of $\RR^d$ with respect to the scalar product $<.,.>_{a(x)}$. 
We have 
\begin{align*}
\|\nabla^{k+1} P_t \test(x)\|_{a(x)}^2 & =  \sum_{i=1}^d \sum_{j \in \{1,\dots,d\}^{k}}  \left<\nabla^{k+1} P_t \test(x), e_i \otimes e_{j_1} \otimes \dots \otimes e_{j_k} \right>_{a(x)}^2 \\
& = \sum_{i=1}^d \sum_{j \in \{1,\dots,d\}^{k}}   \left<\nabla^{k} < \nabla P_t \test(x),  e_i>_{a(x)}, e_{j_1} \otimes \dots \otimes e_{j_k} \right>_{a(x)}^2 \\
& = \sum_{i=1}^d \|\nabla^{k} <\nabla P_t \test(x), e_i>_{a(x)}\|_{a(x)}^2 \\
& = \sum_{i=1}^d   \left\|\nabla^k \lim_{\epsilon \rightarrow 0} \frac{P_t \test (x + \epsilon a(x) e_i) - P_t\test (x)}{\epsilon} \right\|_{a(x)}^2,
\end{align*}
leading to
\beq
\label{eq:curvdimaux}
\|\nabla^{k+1} P_t \test(x)\|_{a(x)}^2 
= \sum_{i=1}^d  \lim_{\epsilon \rightarrow 0} \left\|\nabla^k \frac{P_t \test (x + \epsilon a(x) e_i) - P_t\test (x)}{ \epsilon}\right\|_{a(x)}^2.
\eeq
% By Lemma~\ref{lem:duality} we only need to bound 
% \[
% \|\nabla^{k+1} <\nabla P_t \test, e_i>_a\|_a = \lim_{\epsilon \rightarrow 0}  \|\nabla^{k} \left( P_t \test(x+\epsilon a e_i) - P_t \test(x) \right)\|_a
% \]
Let $\epsilon > 0$ and let $(X_t)_{t \geq 0}$ and $(\tilde{X}^\epsilon_t)_{t \geq 0}$ be two diffusion processes with infinitesimal generator $\mathcal{L}_\mu$ and started respectively at $x$ and $x+\epsilon a(x) e_1$. Letting  $\psi_\epsilon : y \rightarrow \EE[\test(\tilde{X}^\epsilon_t) \mid X_t =y]$, we have
\begin{align*}
P_t \test (x + \epsilon a(x) e_1) - P_t\test (x) & = \EE[\test(\tilde{X}^\epsilon_t) - \test(X_t)] \\
& = \EE[ \EE[\test(\tilde{X}^\epsilon_t) \mid X_t] - \test(X_t)] \\
& = P_t(\psi_\epsilon - \test)(x).
\end{align*}
Using the Markov property of the semigroup $(P_t)_{t \geq 0}$ along with our induction hypothesis yields 
\begin{multline*}
\left\|\nabla^k P_t(\psi_\epsilon - \test)(x) \right\|^2_{a(x)} = \left\|\nabla^k P_{t (k-1)/k} P_{t/k} (\psi_\epsilon - \test)(x) \right\|^2_{a(x)} \\
\leq  e^{- \rho t \max(2,k) \frac{k-1}{k}} \left( \frac{\rho d}{e^{2 \rho  t/{k}} - 1} \right)^{k-1} P_{t(k-1)/k} \left\|\nabla P_{t/k}(\psi_\epsilon - \test)(x) \right\|^2_{a(x)}.
\end{multline*}
Then, using Theorem 4.7.2~\cite{Markov} and Jensen's inequality, we obtain
\begin{align*}
\left\|\nabla^k P_t (\psi_\epsilon - \test)(x)\right\|^2_{a(x)} & \leq 
 e^{-\rho t (k-1)} d^{k-1} \left( \frac{\rho }{e^{2 \rho t/k} - 1} \right)^{k} P_t \left|\psi_\epsilon - \test \right|^2(x) \\
 & \leq e^{-\rho t (k-1)} d^{k-1} \left( \frac{\rho }{e^{2 \rho t/k} - 1} \right)^{k} \EE\left[|\test(\tilde{X}^\epsilon_t) - \test(X_t)|^2\right].
\end{align*}
By Theorem 3.2.4 \citep{Markov} and Theorem 2.2 \citep{duality}, we can take $\tilde{X}^\epsilon_t$ such that, $d_a(\tilde{X}^\epsilon_t,X_t) \leq e^{-\rho t} d_a(\tilde{X}^\epsilon_0, X_0)$ almost surely. 
Using such $\tilde{X}^\epsilon_t$ and since $\|\nabla \test\|_a$ is bounded, we have 
\[
\left|\frac{\test(\tilde{X}^\epsilon_t) -  \test(X_t) }{\epsilon}\right| \leq C \frac{d_a(\tilde{X}^\epsilon_t, X_t) }{\epsilon} \leq C e^{-\rho t}\frac{d_a(\tilde{X}^\epsilon_0, X_0) }{\epsilon},
\]
where $ C = \sup \|\nabla \phi\|_a$. Then, since $a$ is continuous, we have that there exists $\epsilon_0$ such that, if $\epsilon < \epsilon_0$, then 
\[
d_a(\tilde{X}^\epsilon_0, X_0) \leq 2 \epsilon. 
\]
Therefore, we can apply the dominated convergence theorem to obtain
\begin{align*}
\lim_{\epsilon \rightarrow 0} \EE\left[ \left|\frac{ \test(\tilde{X}^\epsilon_t) -  \test(X_t) }{\epsilon} \right|^2 \right]
& =  \EE\left[ \lim_{\epsilon \rightarrow 0} \left|\frac{ \test(\tilde{X}^\epsilon_t) -  \test(X_t) }{\epsilon} \right|^2 \right] \\
& = \EE\left[\lim_{\epsilon \rightarrow 0} \left|\frac{<\tilde{X}^\epsilon_t - X_t,\nabla \test(X_t) >}{\epsilon}  \right|^2 \right] \\
& \leq \EE\left[ \|\nabla \test(X_t)\|_{a(X_t)}^2 \lim_{\epsilon \rightarrow 0} \frac{\|\tilde{X}^\epsilon_t - X_t\|_{a^{-1}(X_t)}^2}{\epsilon^2} \right] \\
& \leq \EE\left[\|\nabla \test(X_t)\|_{a(X_t)}^2 \lim_{\epsilon \rightarrow 0} \frac{d_a(\tilde{X}^\epsilon_t,X_t)^2}{\epsilon^2}   \right] \\
& \leq e^{-2 \rho t} \EE\left[\|\nabla \test(X_t)\|_{a(X_t)}^2 \lim_{\epsilon \rightarrow 0} \frac{d_a(\tilde{X}^\epsilon_0, X_0)^2}{\epsilon^2}  \right] \\
& \leq e^{-2 \rho t} \EE\left[\|\nabla \test(X_t)\|_{a(X_t)}^2 \right]\\
& \leq e^{-2 \rho t} P_t \|\nabla \phi\|_a^2.
\end{align*}
Since a similar result holds for all $(e_i)_{i \in \{1,\dots,d\}}$, combining this bound with (\ref{eq:curvdimaux}) yields 
\[
\|\nabla^{k+1} P_t \test\|^2_a \leq
e^{-\rho t (k+1)} \left( \frac{\rho d }{e^{2 \rho t/k} - 1} \right)^{k} \left( \sqrt{P_t \|\nabla \test\|_a^2} \right)^2,
\]
concluding the proof. 

\subsection{Proof of Lemma~\ref{lem:analytic}}
\label{sec:analproof}
Let us start by providing a crude bound on the functions $(f_k)_{k \geq 1}$. 
For $t> 0, k\in \NN$, let us recall that
\[
f_k(t) = \begin{cases} 
 e^{- \rho t \max(1,k/2)} \left( \frac{\rho d}{e^{2\rho t/(k-1)}-1} \right)^{(k-1)/2}  \textit{ if } \rho \neq 0 \\
\left(\frac{d (k-1)}{2 t}\right)^{(k-1)/2} \textit{ if } \rho = 0 
\end{cases}.
\]
First, if $\rho > 0$, we have 
\[
\frac{\rho}{e^{2 \rho t / (k-1)} - 1} \leq \frac{k-1}{2 t}.
\]
On the other hand, if $\rho < 0$, 
\[
\frac{\rho}{e^{2 \rho t / (k-1)} - 1} = e^{2 |\rho| t / (k-1)} \frac{|\rho|}{e^{2 |\rho| t / (k-1)} - 1} \leq e^{2 |\rho| t / (k-1)}\frac{k-1}{2t}.
\]
Thus, taking $D = \max(1, e^{- \rho t})$, 
\[
f_k(t) \leq D^{\max(1, k/2) + 1} \left(\frac{d (k-1)}{2t}\right)^{(k-1)/2} \leq  D^{2k} \left(\frac{d (k-1)}{2t}\right)^{(k-1)/2} .
\]
Then, by Stirling's approximation, we have that there exists $C > 0$ such that 
 \beq
 \label{eq:fkbound}
\forall k \geq 1, f_k(t) \leq \frac{C^k \sqrt{k!}}{t^{(k-1)/2}}.
\eeq
Now, let us consider a bounded function $\test$ such that $\|\nabla \test\|_a$ is bounded and $t > 0$. By Proposition~\ref{pro:curvdim}, we have, for any $x \in E$ and any $k \in \NN^\star$,
\[
\|\nabla^k P_t \test (x)\|_{a(x)} \leq f_k(t) \sqrt{P_t \|\nabla \phi\|_a^2}.
\]
Combining this with (\ref{eq:fkbound}) and since $\|\nabla \phi\|_a$ is bounded, we finally obtain that there exists $C > 0$ such that
\[
\|\nabla^k P_t \test (x)\|_{a(x)} \leq C'^k \sqrt{k!},
\]
for some $C' > 0$.
Since $a$ is symmetric positive definite on all of $E$, we can use Proposition 2.2.10 \cite{krantz2002primer}, to obtain that $P_t \test$ is real analytic on $E$ and conclude the proof.

\section{Invariant measure of random walks on nearest neighbours graphs}
\label{sec:graph}
\subsection{Context and statement}
Let $X_1, \dots, X_n$ be independent and identically distributed  random variables drawn from a measure $\mu$ with density $f$ on a manifold $\mathcal{T}$ of dimension $m$ embedded in $\RR^d$. 
We call random neighbourhood graph a graph $G_n$ with vertices $\mathcal{X}_n = (X_1,\dots,X_n)$ and edges $\{(x,y) \in \mathcal{X}^2 \mid \|x-y\|^2 \leq \radius_{\mathcal{X}_n}(x)\}$, where $\radius_{\mathcal{X}_n}: \RR^d \rightarrow \RR^+$ is a radius function.
In data analysis, it is common to analyze data by computing such neighbourhood graphs from the data and analyze this graph with graph analysis algorithms to perform various operations such as clustering, dimensional reduction and other data processing \citep{eigenmap,labelprop,spectral}.
With such an approach, one only relies on the properties of the random neighbourhood graph used while discarding all other information regarding the data. One may thus wonder whether all the relevant statistical information is contained in the graph or if some critical information is lost in the process. To answer this question, \citep{roadmap} proposed to check whether it is possible to estimate the density $f$ from which the data is drawn using only the structure of a random neighbourhood graph. Indeed, if one can compute a good estimator of $f$ from such a graph, then one can expect this graph to contain most of the relevant information regarding the original data. 

When considering undirected random neighborhood graph, such as geometric random graphs for which $\radius_{\mathcal{X}_n}$ is constant, the degree of the graph provides a simple and efficient estimator of $f$ which converges as $n \rightarrow \infty$ and $\radius_{\mathcal{X}_n} \rightarrow 0$ sufficiently slowly. However such an approach cannot be used for directed graphs which can have constant degree as we will see later on. Luckily, it is still possible to estimate $f$ using the invariant measure of a random walk on the graph, which corresponds to the degree function when the graph is undirected. As the number of points $n$ grows to infinity, it has been shown by \citep{Hashimoto}  that, if the radius function $\radius_{\mathcal{X}_n}$ converges, after a proper rescaling, to a deterministic function $\tilde{\radius} : \RR^d \rightarrow \RR^+$, 
then the invariant measures of random walks on the random neighbourhood graphs $G_n$ with radius functions $\radius_{\mathcal{X}_n}$ converge weakly to the invariant measure of a 
diffusion process with infinitesimal generator defined by
\[
\forall \test \in C^\infty(\RR^d, \RR), \mathcal{L}_{\tilde{\mu}} \test = \tilde{\radius}^2 \left( \nabla \log f . \nabla \test + \frac{1}{2} \Delta \test \right),
\]
where $\Delta$ is the Laplace operator. 
As the invariant measure $\tilde{\mu}$ of the limiting diffusion process has a density proportional to $\frac{f^2}{\tilde{\radius}^2}$, it is possible to derive an estimator of $f$ from the invariant measure of a random walk on the random neighborhood graph. 
Let us show how our results can be used to quantify this convergence by tackling the specific case of nearest neighbours graphs, which are quite popular in data analysis thanks to their sparsity. 

Nearest neighbours graphs are obtained by picking an integer $k > 0$ and putting an edge between two points $X_i$ and $X_j$ if and only if $X_j$ is one of the $k$-nearest neighbours of $X_i$. Equivalently, a $k$-nearest neighbours graph corresponds to a random neighbourhood graph with radius function 
\[
\radius_{\mathcal{X}_n}(x) = \inf \left\{r \in \RR^+ | \sum_{i =1}^n 1_{\|X_i - x\| \leq r} \geq k \right\}.
\]
Such graphs have constant degree equal to $k$, thus we cannot expect the degree function to be of any use to estimate $f$. However, if $k$ is properly chosen, random walks on such graphs are approximation of a diffusion process with infinitesimal generator 
\[
\forall \test \in C^\infty(\RR^d, \RR), \mathcal{L}_{\tilde{\mu}} \test = f^{-2/d} (\nabla \log f . \nabla \test +  \frac{1}{2}\Delta \test)
\]
and invariant measure $\tilde{\mu}$ with a density proportional to $f^{2+2/d}$. Our objective in this Section is to quantify the convergence of invariant measures of random walks on such graphs to $\tilde{\mu}$. To avoid boundary issues, let us assume that $\mu$ is supported on the flat torus $\mathcal{T} = (\RR / \ZZ)^d$ with strictly positive density $f \in C^\infty(\mathcal{T}, \RR^+)$. 
For any integer $k \leq n$, we denote by $\pi_{k,n}$ an invariant measure of a random walk 
on the $k$-nearest neighbour graphs with vertices $\mathcal{X}_n$. We obtain the following convergence result.

\begin{proposition}
\label{pro:randomgraph}
There exists $C > 0$ such that, for any positive integers $k,n$ such that $C \log(n) < k < n$, 
\[
\PP\left(W_2(\pi_{k,n}, \tilde{\mu}) \leq C\left(\sqrt{\frac{\log n}{k}} \left(\frac{n}{k}\right)^{1/d} + \left(\frac{k}{n}\right)^{1/d} \right) \right) \geq 1 - \frac{C}{n}.
\] 
\end{proposition}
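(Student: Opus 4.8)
The plan is to apply Theorem~\ref{thm:main} with $\mu = \tilde\mu$ (density $\propto f^{2+2/d}$), $a(x) = \tfrac12 f(x)^{-2/d} I_d$, $b(x) = f(x)^{-2/d}\nabla\log f(x)$, and $\nu = \pi_{k,n}$, the invariant measure of the random walk on the $k$-nearest neighbour graph, rescaled by a suitable step $\step = \step(k,n)$. First I would verify Assumption~\ref{ass:main} for $\tilde\mu$ on the compact flat torus $\mathcal T = (\RR/\ZZ)^d$: since $f$ is smooth and strictly positive on a compact manifold without boundary, $a$ and $b$ are smooth and bounded, $a$ is uniformly elliptic, $d_a(\cdot,0)$ is bounded hence in every $L_p$, and compactness gives a curvature-dimension bound $CD(\rho,\infty)$ for some $\rho\in\RR$ (a spectral-gap/Bakry-\'Emery computation on the torus) together with the exponential $W_{2,a}$-contraction in \ref{ass:main-3}. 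Assumption~\ref{ass:mainnu} for $\pi_{k,n}$ follows because the random walk kernel $K$ is supported on points within the $k$-NN radius, which is $O((k/n)^{1/d})$ uniformly with high probability, so all increments $\|y-x\|$ are deterministically bounded on the relevant event, making \ref{ass:mainnu2} and the integrability in \ref{ass:mainnu3} automatic.

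The heart of the argument is estimating the five terms in the bound of Theorem~\ref{thm:main}. The key geometric input is a concentration/approximation statement: with probability at least $1 - C/n$, simultaneously over all vertices $x \in \mathcal X_n$, the empirical ball of radius $r_{\mathcal X_n}(x)$ contains exactly $k$ points and $r_{\mathcal X_n}(x)$ is within a $(1+O(\sqrt{\log n / k}))$ factor of the deterministic radius $\tilde r(x)$ solving $f(x)\,\omega_d\,\tilde r(x)^d = k/n$, where $\omega_d$ is the volume of the unit $d$-ball; this is a standard VC / binomial-concentration argument (the "$C\log n < k$" hypothesis is exactly what makes the relative error $\sqrt{\log n/k}$ small and the failure probability $O(1/n)$). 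Granting this, the normalized first moment $\step^{-1}\int (y-x)K(x,dy)$ is, up to the chosen constant $\step$, a Monte-Carlo estimate of $f(x)^{-2/d}\nabla\log f(x)$ with bias $O((k/n)^{1/d})$ from curvature of $f$ over the ball and fluctuation $O(\sqrt{\log n/k}\,(n/k)^{1/d})$; the normalized second moment matches $a(x) = \tfrac12 f(x)^{-2/d}I_d$ with the same order of error; the third moment is $O((k/n)^{3/d})$ hence contributes $O(\step^{-1}(k/n)^{3/d})$; and the $k\geq4$ tail is dominated by a geometric series in $\step^{-1/2}(k/n)^{1/d}\cdot(\text{const})$, which converges and is lower order once $\step$ is chosen of order $(k/n)^{2/d}$ (the natural parabolic scaling, since $a \sim r^2$). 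Choosing $\step \asymp (k/n)^{2/d}$ and plugging in, the $\sqrt\step$ term gives $O((k/n)^{1/d})$, the $b$-term gives $O((k/n)^{1/d})$ (as $\|b\|$ is bounded), the first/second-moment mismatch terms give $O(\sqrt{\log n/k}\,(n/k)^{1/d})$ after dividing the absolute $O(\sqrt{\log n/k}(k/n)^{1/d})$ fluctuation by $\step$, and the third-moment and tail terms are lower order; summing yields the claimed bound $C(\sqrt{\log n/k}\,(n/k)^{1/d} + (k/n)^{1/d})$.

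I expect the main obstacle to be the uniform-over-vertices control of the local radius $r_{\mathcal X_n}(x)$ and of the first two empirical moments of the increments: one needs the error estimates to hold \emph{simultaneously} at all $n$ random vertices (a union bound over $n$ events, each failing with probability $\ll 1/n^2$, which is where a Bernstein bound with the $k \gtrsim \log n$ threshold is used), and one must separate cleanly the \emph{bias} coming from the Taylor expansion of $f$ and of the map $x \mapsto r_{\mathcal X_n}(x)$ across a ball of radius $O((k/n)^{1/d})$ from the \emph{stochastic fluctuation} of order $\sqrt{\log n/k}$ per coordinate. A secondary technical point is that the random walk is on a \emph{directed} graph (the $k$-NN relation is not symmetric), so $\pi_{k,n}$ is not simply the degree measure and one cannot write it explicitly; but Theorem~\ref{thm:main} only uses that $\pi_{k,n}$ is $K$-invariant, so this causes no difficulty beyond needing the moment estimates to hold $\pi_{k,n}$-almost everywhere, which the uniform-over-vertices bound supplies. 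Finally, one must check that the constant $C(T,\rho,d)$ and the series in Theorem~\ref{thm:main} are genuinely finite for the $\rho$ produced on the torus and for the scaling $\step \asymp (k/n)^{2/d}$, which reduces to the convergence of $\sum_k C(T,\rho,d)^{k-1}(k/n)^{(k-3)/d}/(\step^{(k-3)/2}\sqrt{k!})$ — immediate since the ratio of consecutive terms tends to $0$.
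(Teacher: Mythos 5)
Your plan matches the paper's proof essentially step for step: apply Theorem~\ref{thm:main} on the flat torus with $a=\tfrac12 f^{-2/d}I_d$, $b=f^{-2/d}\nabla\log f$, $\step=\tau\asymp(k/n)^{2/d}$, verify Assumptions~\ref{ass:main} and \ref{ass:mainnu} via compactness of $\mathcal{T}$, and bound the five moment terms through Chernoff/Bernstein concentration of the $k$-NN radius and of the empirical increment moments together with a union bound over the $n$ vertices. The only imprecision is in tracing the source of the $(k/n)^{1/d}$ bias in the first-moment term: in the paper it arises not directly from Taylor expanding $f$ over a ball of deterministic radius (which gives $(k/n)^{2/d}$), but from routing the $O((k/n)^{1+2/d})$ curvature error in $\mu(B(x,r))$ through the count discrepancy $|N_r-k|$ and hence through the gap between the deterministic and empirical radii, after which the $1/(k\step)$ normalization amplifies it to $(k/n)^{1/d}$ — nonetheless your final rate is correct.
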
 
In particular, if $n >> k >> (\log n)^{d/(2+d)}  n^{2/(2+d)}$ then $W_2(\pi_{k,n}, \tilde{\mu})$ converges, in terms of Wasserstein distance of order $2$, to $\tilde{\mu}$. 
Let us note that this bound is likely to be suboptimal. Indeed, such a result is counterintuitive as the requirements on $k$ for the convergence to hold get weaker as the dimension of the data increases while we would expect the task of estimating $\tilde{\mu}$ to be more complex in higher dimensions. It is conjectured in \citep{Hashimoto} that it is sufficient for $n >> k >> \log(n)$ for $\pi_{k,n}$ to converge to $\tilde{\mu}$.

\subsection{Proof}

Let $\mathcal{T} = (\RR/\ZZ)^d$  be the $d$-dimensional flat torus and let $\mu$ be a measure supported on $\mathcal{T}$ with strictly positive density $f \in C^\infty(\mathcal{T},\RR^+)$. 
While $\mathcal{T}$ is not an open set of $\RR^d$, it is a flat and compact manifold. Thus, the arguments used in the proof of Theorem~\ref{thm:main} along with its conclusions still hold. 
Let $\tilde{\mu}$ be the measure with density $\tilde{f} = Z f^{2+2/d}$, where $Z > 0$ is a renormalization factor. Let us denote the Lebesgues measure on $\mathcal{T}$ by $\lambda$ and let $\nabla .$ be the divergence operator. For any two functions $\phi, \psi \in C^\infty(\mathcal{T}, \RR)$, we have, using an integration by parts with respect to the Lebesgues measure, 
\begin{align*}
\int_\mathcal{T} \phi f^{-2/d} (\nabla \log f . \nabla \psi & + \frac{1}{2} \Delta \psi ) d\tilde{\mu} \\
& = Z \int_\mathcal{T} \phi  (\nabla \log f . \nabla \psi  + \frac{1}{2} \Delta \psi ) f^2 d \lambda \\
& = \frac{Z}{2} \int_\mathcal{T} \phi  (\nabla \log f^2 . \nabla \psi  + \Delta \psi ) f^2 d \lambda \\
& = \frac{Z}{2} \int_\mathcal{T} \phi  (\nabla f^2 . \nabla \psi  + f^2 \Delta \psi )  d \lambda \\
& = \frac{Z}{2} \int_\mathcal{T} \phi  \nabla .(f^2 \nabla \psi) d \lambda \\
& = - \frac{Z}{2} \int_\mathcal{T} f^2 \nabla \phi  . \nabla \psi d \lambda \\
& = -\int_\mathcal{T} \frac{f^{-2/d}}{2} \nabla \phi  . \nabla \psi d \tilde{\mu}.
\end{align*}
The measure $\tilde{\mu}$ is thus the reversible probability measure of the operator $\mathcal{L}_{\tilde{\mu}}$ defined by
\[
\forall \test \in C^\infty(\mathcal{T}, \RR), \mathcal{L}_{\tilde{\mu}} \test = f^{-2/d}  (\nabla \log f . \nabla \test  + \frac{\Delta \test}{2}).
\] 
As $\mathcal{T}$ is compact, $f$ has bounded derivatives of all orders. Since $f$ is also strictly positive, $f^{-2/d} \nabla \log f$ and $f^{-2/d} I_d$ admit bounded derivatives of all orders and $f^{-2/d} I_d$ is strictly positive-definite on all of $\mathcal{T}$ and thus items \ref{ass:main-1} and \ref{ass:main-2} of Assumption~\ref{ass:main} are verified. 
Finally, thanks to Corollary 2.2 \cite{Fwang}, the Markov semigroup associated to $\mathcal{L}_{\tilde{\mu}}$ verifies property \ref{ass:main-3} of Assumption~\ref{ass:main}. 

Let $k,n \in \NN$ such that $k < n$, let $(X_1, \dots, X_n)$ be independent random variables drawn from the measure $\mu$ and let $\pi_{k,n}$ be the invariant measure of the random walk on the $k$-nearest neighbor graphs built on the point cloud $(X_1,\dots,X_n)$. For $x \in \mathcal{T}$ and $r > 0$, we denote by $B(x,r)$ the ball of radius $r$ centred in $x$. Finally, let
\[
\forall x \in \mathcal{T}, \radius_{\mathcal{X}_n}(x) = \inf \left\{s \in \RR^+ | \sum_{i =1}^n 1_{\|X_i - x\| \leq s} \geq k \right\}
\]
be the radius function corresponding to a $k$-nearest neighbor graph.
In the remainder of this Section, we denote by $C$ a generic constant depending only on $f$ and $d$. Furthermore, for any $m \in \NN$, let 
\[
V_m = \int_{\mathcal{B}(0,1)} x_1^m dx.
\]
By definition, $\pi_{k,n}$ is the invariant measure of the Markov chain with state space 
$\mathcal{X}_n$ and transition kernel $K$ defined on any $X_i, X_j \in \mathcal{X}_n$ by 
\[
K(X_i, X_j) = \frac{1}{k} 1_{\|X_j - X_i\| \leq  \radius_{\mathcal{X}_n}(x)}.
\]
As Assumption~\ref{ass:mainnu} is verified for the measure $\pi_{k,n}$ and the Markov kernel $K$ due to the compactness of $\mathcal{T}$, we can apply Theorem~\ref{thm:main} with $T = 1$ and 
\[
\tau = \step = \left(\frac{k}{n} \right)^{2/d} \frac{V_2}{V_0^{1+2/d}},
\]
to obtain 
\beq
\label{eq:mainknn}
W_{2,a} (\pi_{k,n}, \tilde{\mu})  \leq C (s + \sqrt{s} + I_1 + I_2 + s^{-1} \log(s) I_3) 
+ \sum_{m=4}^\infty \frac{C^m}{\sqrt{m! s^{m-1}}} I_m,
\eeq
where
\begin{itemize}
\item $I_1 = \sup_{i \in \{1,\dots,n\}} \left\|\frac{1}{k \step} \sum_{X_j \in \mathcal{B}(X_i,r_{\mathcal{X}_n}(X_i))}  (X_j - X_i) - f^{-2/d} \nabla \log f (X_i) \right\|$ ;
\item $I_2 = \sup_{i \in \{1,\dots,n\}} \left\|\frac{1}{2 k \step} \sum_{X_j \in \mathcal{B}(X_i,r_{\mathcal{X}_n}(X_i))}  (X_j - X_i)^{\otimes 2} -  \frac{f(X_i)^{-2/d} I_d}{2} \right\|$ ;
\item $\forall m > 2, I_m = \sup_{i \in \{1,\dots,n\}} \left\|\frac{1}{k} \sum_{X_j \in \mathcal{B}(X_i,r_{\mathcal{X}_n}(X_i))}  (X_j - X_i)^{\otimes m} \right\|$.
\end{itemize}

In the remainder of this Section, we show that, with probability greater than $1 - \frac{C}{n}$, 
\begin{enumerate}[label=(\roman*)]
\item \label{itm:ineq1} $I_1 \leq C \left( \sqrt{\frac{ \log n}{k}} \left(\frac{n}{k}\right)^{1/d} + \left(\frac{k}{n}\right)^{1/d}\right)$;
\item \label{itm:ineq2} $I_2 \leq C\left(\sqrt{\frac{\log n}{k}} + \left(\frac{k}{n}\right)^{2/d}\right)$;
\item \label{itm:ineq3} $s^{-1} I_3 \leq C \left(\sqrt{\frac{ \log n}{k}} \left(\frac{k}{n}\right)^{1/d} + \left(\frac{k}{n}\right)^{2/d}\right)$;
\item \label{itm:ineq4} $\forall m > 3,  I_m\leq C^m \left(\frac{k}{n}\right)^{m/d}$.
\end{enumerate}
Proposition~\ref{pro:randomgraph} is then obtained by injecting these bounds in (\ref{eq:mainknn}) and by remarking that, since $\mathcal{T}$ is compact and $a$ is smooth, then $W_{2,a} \leq C W_2$. 

Let $x \in \mathcal{T}, r > 0$, 
$N_r = \sum_{i=1}^n 1_{X_i \in \mathcal{B}(x,r)}$, $P_r = \mu(B(x,r))$ and 
$0 \leq \delta< 1$. By the multiplicative Chernoff bound,
\[
P\left(|N_r - n P_r| \geq \delta n P_r\right) \leq  2 e^{-\delta^2 n P_r / 3}.
\]
In particular, taking $\delta = \left(\frac{3\log(2n^2)}{n P_r}\right)^{1/2}$,
we obtain 
\beq
\label{eq:Chernoff}
P\left(|N_r - n P_r| \geq  (3 n P_r \log(2n^2))^{1/2}\right) \leq \frac{1}{n^2}.
\eeq
Then, taking $r_M = \left( \frac{2k}{n V_0 \min f} \right)^{1/d}$, we have $P_{r_M} \geq \frac{2k}{n}$ and 
\[
P\left(N_{r_M} \leq 2k  - C \sqrt{ k \log n}\right) \leq \frac{1}{n^2}.
\]
With $\log n < \frac{k}{C^2}$, we have that $\PP(N_{r_M} \geq k) \geq 1 - \frac{1}{n^2}$ and
\beq
\label{eq:radiusMax}
\PP(\radius_{\mathcal{X}_n}(x) \leq r_M) \geq 1 - \frac{1}{n^2}
\eeq
 and, by
a union bound, 
\[
\PP(\sup_{x \in \mathcal{X}_n} \radius_{\mathcal{X}_n}(x) \leq r_M) \geq 1 - \frac{1}{n}.
\]
Therefore, for all $m > 3$, 
\begin{align*}
 I_m & \leq  \frac{1}{k} \sum_{X_j \in \mathcal{B}(X_i,\tilde{r})}  \left\|X_j - X_i\right\|^m \\
& \leq r_M^m
\end{align*}
with probability $1 - \frac{1}{n}$ and inequality \ref{itm:ineq4} follows. 

Let us now prove inequality \ref{itm:ineq1}. Let $x \in \mathcal{T}$ and $r = \left( \frac{k}{n V_0 f(x)} \right)^{1/d}$. Using a Taylor expansion, we obtain
\begin{align*}
\EE[(X_i - x)& 1_{X_i \in \mathcal{B}(x,r)}] \\
& = \int_{\mathcal{B}(x,r)} (y-x) \mu(dy) \\
& =  \int_{\mathcal{B}(x,r)} (y-x)f(y) dy \\
& =  \int_{\mathcal{B}(x,r)} (y-x)f(x) + (y-x)^{\otimes 2} \nabla f(x) + \frac{(y-x)^{\otimes 3} \nabla^2 f (x)}{2} + O(r^4) \, dy.
\end{align*}
Hence, by symmetry of $\mathcal{B}(x,r)$, we have
\[
\EE[(X_i - x)1_{X_i \in \mathcal{B}(x,r)}] =  V_2 r^{d+2} \nabla f(x) + O(r^{d+4})
\]
and, by definition of $s$,
\beq
\label{eq:auxknn1}
\EE[(X_i - x)1_{X_i \in \mathcal{B}(x,r)}]= \frac{k s }{n f^{2/d}(x)} \nabla \log f (x) + O\left(s \left(\frac{k}{n} \right)^{1+2/d} \right).
\eeq
% Similarly,
% \[
% \EE[(X_i - x)^{\otimes 2}1_{X_i \in \mathcal{B}(x,r)}] = V_2 r^{d+2} I_d + O(r^{d+4}).
% \]
% and
% \[
% \EE[(X_i - x)^{\otimes 3}1_{X_i \in \mathcal{B}(x,r)}] = O(r^{d+4}).
% \]
Let $b_1 = \frac{1}{k s} \sum_{i=1}^n (X_i - x )1_{X_i \in \mathcal{B}(x,r)}$. 
Since 
\[
\|(X_i - x )1_{X_i \in \mathcal{B}(x,r)} \| \leq r \leq C \left(\frac{k}{n}\right)^{1/d}
\]
and 
\[
\EE[\|X_i - x \|^21_{X_i \in \mathcal{B}(x,r)} ] \leq r^2 P_r \leq C \left( \frac{k}{n} \right)^{1+2/d}, 
\]
we can apply Bernstein's inequality to each coordinate of $b_1$, to obtain that 
\[
\forall t > 0, P(\left\|ks b_1 - ks \EE[b_1]\|_\infty \geq t \right) \leq 2 d e^{- \frac{t^2}{ C (k (k/n)^{2/d} + t(k/n)^{1/d})}}.
\]
Thus, as long as $k \geq 2C \log(2dn^2)$, taking $t = \sqrt{2 C \log(2 d n^2) k} \left(\frac{k}{n}\right)^{1/d}$ yields 
\[
P\left(\left\|ks b_1 - n \EE[(X_i - x)1_{X_i \in \mathcal{B}(x,r)}] \right\|_\infty \geq C \left( \frac{k}{n} \right)^{1/d} \sqrt{k \log n}\right) \leq \frac{1}{n^2}
\]
or 
\[
P\left(\left\|b_1 - \frac{n}{ks} \EE[(X_i - x)1_{X_i \in \mathcal{B}(x,r)}] \right\|_\infty \geq C \left(\sqrt{\frac{ \log n}{k}} \left(\frac{n}{k}\right)^{1/d}\right) \right) \leq \frac{1}{n^2}.
\]
Thus, by (\ref{eq:auxknn1}), 
\beq
\label{eq:auxknn2}
P\left(\left\|b_1 - f^{-2/d}(x) \nabla \log f (x) \right\|_\infty \geq C \left( \sqrt{\frac{ \log n}{k}} \left(\frac{n}{k}\right)^{1/d} + \left(\frac{k}{n} \right)^{2/d} \right) \right) \leq \frac{1}{n^2}. 
\eeq
Now, since $\|\nabla^2 f\|$ is bounded on $\mathcal{T}$, 
\begin{align*}
|P_r - V_0 r^{d}| = \left|\int_{B(x,r)} f(y) - f(x) dy \right | & \leq \int_{B(x,r)} r^2 \max_{y \in \mathcal{T}} \|\nabla^2 f(y)\| dy \\
&\leq V_2 r^{d+2} \max_{y \in \mathcal{T}} \|\nabla^2 f(y)\|,
\end{align*}
thus
\[
\left|P_r - \frac{k}{n}\right| \leq C\left(\frac{k}{n}\right)^{1+2/d}.
\]
Then, by (\ref{eq:Chernoff}), 
\beq
\label{eq:radiusconc}
\PP\left(|N_r - k| \leq C \left(\sqrt{k \log n}+\frac{k^{1+2/d}}{n^{2/d}}\right)\right) \geq 1 - \frac{1}{n^2}.
\eeq
By (\ref{eq:radiusMax}), taking $b_2 = \frac{1}{ks} \sum_{X_i \in \mathcal{B}(x,\radius_{\mathcal{X}_n}(x))} X_i - x$, we have with probability $1 - \frac{1}{n^2}$
\[
\|b_2 - b_1\| \leq \frac{r_M}{ks} |N_r - N_{\radius_{\mathcal{X}_n}(x)}| = \frac{r_M}{ks} |N_r - k| \leq C \frac{n^{1/d}}{k^{1+1/d}} |N_r - k|.
\]
Combining this bound with (\ref{eq:radiusconc}), we obtain  
\beq
\label{eq:auxknn3}
P\left(\|b_1 - b_2\| \leq C \left( \sqrt{\frac{ \log n}{k}} \left(\frac{n}{k}\right)^{1/d} + \left( \frac{k}{n} \right)^{1/d} \right)\right) \geq 1 - \frac{2}{n^2}.
\eeq
Combining (\ref{eq:auxknn2}) and (\ref{eq:auxknn3}), we have, with probability $1 - \frac{3}{n^2}$, 
\begin{align*}
\bigg\|\frac{1}{k \step} \sum_{X_i \in \mathcal{B}(x,r_{\mathcal{X}_n}(x))}  (X_i - x) & - f^{-2/d} \nabla \log f \bigg\| \\
& = \left\|b_2  - f^{-2/d}\nabla \log f \right\| \\
& \leq d\left\|b_1-f^{-2/d}\nabla \log f \right\|_\infty + C \left( \frac{n^{1/d} \sqrt{\log n} }{k^{1/2+1/d}} + \left( \frac{k}{n} \right)^{1/d} \right) \\
& \leq  C \left( \sqrt{\frac{ \log n}{k}} \left(\frac{n}{k}\right)^{1/d} + \left(\frac{k}{n}\right)^{1/d}\right).
\end{align*}
Inequality \ref{itm:ineq1} is finally obtained by using a union-bound. 

Let us derive inequalities \ref{itm:ineq2} and \ref{itm:ineq3} through similar computations. First, using a Taylor expansion, we obtain  
\[ 
\EE[(X_i - x)^{\otimes 2} 1_{X_i \in \mathcal{B}(x,r)}] = V_2 r^{d+2} f(x) I_d + O(r^{d+4}) 
\]
and 
\[
\EE[(X_i - x)^{\otimes 3} 1_{X_i \in \mathcal{B}(x,r)}] = O(r^{d+4}) 
\] 
Letting $a_1 = \frac{1}{2ks} \sum_{i=1}^n (X_i - x)^{\otimes 2} 1_{X_i \in \mathcal{B}(x,r)}$ and 
$c_1 = \frac{1}{ks} \sum_{i=1}^n (X_i - x)^{\otimes 3} 1_{X_i \in \mathcal{B}(x,r)}$, we have, by Bernstein's inequality, 
\[
P\left(\left\|a_1 - \frac{1}{2f^{2/d}(x)} I_d \right\| \geq C \left( \sqrt{\frac{ \log n}{k}} + \left(\frac{k}{n} \right)^{2/d} \right) \right) \leq \frac{1}{n^2} 
\]
and 
\[
P\left(\left\|c_1 \right\| \geq C \left(  \sqrt{\frac{ \log n}{k}} \left(\frac{k}{n}\right)^{1/d} + \left(\frac{k}{n} \right)^{2/d} \right) \right) \leq \frac{1}{n^2}.
\]
Then, letting $a_2 = \frac{1}{2ks} \sum_{i=1}^n (X_i - x)^{\otimes 2} 1_{X_i \in \mathcal{B}(x,r_{\mathcal{X}_n}(x))}$ and 
$c_2 = \frac{1}{ks} \sum_{i=1}^n (X_i - x)^{\otimes 3} 1_{X_i \in \mathcal{B}(x,r_{\mathcal{X}_n}(x))}$ and using (\ref{eq:radiusconc}) once more, 
\[
P\left(\|a_2 - a_1\| \leq  C\left( \sqrt{\frac{ \log n}{k}} + \left(\frac{k}{n} \right)^{2/d} \right)\right) \geq 1 - \frac{1}{n^2}
\]
and
\[
P\left(\|c_2 - c_1\| \leq C \left(\sqrt{\frac{ \log n}{k}} \left(\frac{k}{n}\right)^{1/d} + \left(\frac{k}{n}\right)^{3/d}\right)\right) \geq 1 - \frac{1}{n^2}
\]
From here, we obtain
\[
 \left\|\frac{1}{k \step} \sum_{X_i \in \mathcal{B}(x,r_{\mathcal{X}_n}(x))}  \frac{(X_i - x)^{\otimes 2}}{2} -  \frac{f(x)^{-2/d} I_d}{2} \right\|  \leq 
 C\left( \sqrt{\frac{ \log n}{k}} + \left(\frac{k}{n} \right)^{2/d} \right)
\]
and 
\[
 \left\|\frac{1}{k \step} \sum_{X_i \in \mathcal{B}(x,r_{\mathcal{X}_n}(x))}  (X_i - x)^{\otimes 3} \right\|  \leq 
 C \left(\sqrt{\frac{ \log n}{k}} \left(\frac{k}{n}\right)^{1/d} + \left(\frac{k}{n}\right)^{2/d}\right)
\]
with probability $1 - \frac{6}{n^2}$. 
Inequalities \ref{itm:ineq2} and \ref{itm:ineq3} are finally obtained thanks to a union bound inequality. 

%\section{Proof of Theorem~\ref{thm:main}}
%\label{sec:approach}

%\section{Proof of Proposition~\ref{pro:randomgraph}}
%\label{subsec:knn}

\section{Approximation arguments}
\label{sec:approx}
Let us present the approximation arguments necessary to conclude the proofs of Theorem  \ref{thm:main}. 
Suppose Assumptions~\ref{ass:main} and \ref{ass:mainnu} are verified and let $s > 0$. Furthermore, we will assume, without any loss of generality, that $0 \in E$. We denote by $X$ a random variable drawn from the measure $\nu$ and by $Y$ the random variable corresponding to the state of the random walk with Markov kernel $K$ and initial state $X$ after a single jump. For any $t \in \RR$, let 
\[
\forall t \in \RR, Y(t) = \begin{cases} X \text{ if $t < \tau$}\\
Y \text{ otherwise}
\end{cases}
\]
%and 
%\begin{align*}
% S(t) = & f_1(t)\left\|\EE\left[\frac{Y(t)-X}{s} - b(X) \mid X\right] \right\|_{a^{-1}(X)}\\
%& +  f_2(t) \left\|\EE\left[\frac{(Y(t)-X)^{\otimes 2}}{2s} - a(X)\mid X\right] \right\|_{a^{-1}(X)}\\
%& + \sum_{k=3}^{\infty} \frac{f_k(t)}{s k!} \left\|\EE[(Y(t)-X)^{\otimes k}\mid X]\right\|_{a^{-1}(X)},
%\end{align*} 
%where, for any $k \in \NN$, 
%\[
%f_k(t) = \begin{cases} 
% e^{- \rho t \max(1,k/2)} \left( \frac{2\rho d}{e^{2\rho t/(k-1)}-1} \right)^{(k-1)/2}  \textit{ if } \rho \neq 0 \\
%\left(\frac{d (k-1)}{t}\right)^{(k-1)/2} \textit{ if } \rho = 0 
%\end{cases}.
%\]
Let $(K_n)_{n \in \NN}$ be a family of compact sets such that, for any $n \in \NN$, $K_n \subset K_{n+1} \subset E$ and $\cup_{n \in \NN} K_n = E$.  Let $0 < \epsilon_1, \epsilon_2 < 1$, $Z$ be a random variable drawn from $\mu$, $N$ be a random variable in the ball of radius $1$ with smooth density and let $I$ be a Bernoulli random variable with parameter $\epsilon_2$. These random variables are such that $N,I,Z, X$ and $Y$ are independent. Finally, let $U = \epsilon_1 N$. 
For $t \geq 0$, let
\[
\tilde{Y}(t) = IZ + (1-I) ( U + Y(t) 1_{X,Y \in K_n})
\]
and let $\tilde{\nu}_{n, \epsilon_1, \epsilon_2}$ be the measure of $\tilde{Y}(0)$. For any $x \in E, \test \in C^\infty_c(E, \RR)$ and $t> 0$, let 
\[
(\mathcal{L}_{\tilde{\nu}_{n, \epsilon_1, \epsilon_2}})_t \test (x)=  \EE\left[I \mathcal{L}_\mu \test (\tilde{Y}(t)) + \frac{(1-I)}{\step}[ \test(\tilde{Y}(t)) - \test(\tilde{Y}(0))]  \mid \tilde{Y}(0) = x\right].
\]
Let $t > 0$. Since $\tilde{Y}(t)$ and $\tilde{Y}(0)$ follow the same law and since $\EE[\mathcal{L}_\mu \test (Z)]= 0$, we have
\[
\EE[(\mathcal{L}_{\tilde{\nu}_{n, \epsilon_1, \epsilon_2}})_t \test (\tilde{Y}(0))] = 0. 
\]
Rewriting $(\mathcal{L}_{\tilde{\nu}_{n, \epsilon_1, \epsilon_2}})_t$, we obtain
\[
\EE[(\mathcal{L}_{\tilde{\nu}_{n, \epsilon_1, \epsilon_2}})_t \test (\tilde{Y}(0))]  =  \epsilon_2 \EE[\mathcal{L}_\mu \test(Z) ] + \frac{1- \epsilon_2}{\step} \EE\left[(\test(Y(t) + U) - \test(X + U))1_{ X,Y(t) \in K_n}\right].
%\\
%& \qquad + \frac{1- \epsilon_2}{\step} \EE[1_{X \notin K_n, Y(t) \in K_n}(\test(Y(t) + U) - \test(U))] \\
%& \qquad  + \frac{1- \epsilon_2}{\step} \EE[1_{Y(t) \notin K_n, X \in K_n}(\test(U) - \test(X + U))]. 
\]
Then, for any bounded real analytic function $\test$ with bounded derivatives of all orders, 
\[
\EE[( \mathcal{L}_{\tilde{\nu}_{n, \epsilon_1, \epsilon_2}})_t \test (\tilde{Y}(0))] = \epsilon_2 \EE[\mathcal{L}_\mu \test(Z) ]  
+  (1- \epsilon_2) \EE\left[1_{X,Y \in K_n}   \sum_{k=1}^\infty \frac{1}{sk!} \left< (Y(t) - X)^{\otimes k}, \nabla^k \test(X + U) \right >\right]
%& \qquad + \frac{1- \epsilon_2}{\step} \EE[1_{X \notin K_n, Y(t) \in K_n}(\test(Y(t) + U) - \test(U))] \\
%& \qquad  + \frac{1- \epsilon_2}{\step} \EE[1_{Y(t) \notin K_n, X \in K_n}(\test(U) - \test(X + U))] 
\]
and
\begin{multline*}
\frac{\EE[((\mathcal{L}_{\tilde{\nu}_{n, \epsilon_1, \epsilon_2}})_t - \mathcal{L}_\mu) \test (\tilde{Y}(0))]}{1 - \epsilon_2} = \EE\left[1_{X,Y \in K_n}   \sum_{k=1}^\infty \frac{1}{\step k!} \left< (Y(t) - X)^{\otimes k}, \nabla^k \test(X + U) \right>\right] \\
- \EE\left[1_{X,Y \in K_n}\mathcal{L}_\mu \test(U + X) + 1_{(X,Y) \notin K_n^2} \mathcal{L}_\mu \test(U)\right] 
%& \qquad + \frac{1}{\step} \EE[1_{X \notin K_n, Y(t) \in K_n}(\test(Y(t) + U) - \test(U))] \\
%& \qquad  + \frac{1}{\step} \EE[1_{Y(t) \notin K_n, X \in K_n}(\test(U) - \test(X + U))] 
\end{multline*}
Since $K_n$ is a compact set, there exists a compact set $K_n' \subset E$ and $e(n) > 0$ such that, if $\epsilon_1 < e(n)$, then $(X + U)1_{X \in K_n} \in K_n'$. 
Let us now assume that $\epsilon_1 < e(n)$. 
Then, $\tilde{\nu}_{n, \epsilon_1, \epsilon_2}$ admits a measure $h$ with respect to $\mu$ such that $h = \epsilon + f$ with $\epsilon > 0$ and $f \in C^\infty_c(E,\RR)$. 
Thus, for $t > 0$, we can follow the computations of Section~\ref{sec:main} to obtain 
\[
I_\mu((\tilde{\nu}_{n, \epsilon_1, \epsilon_2})_t) \leq \sqrt{\EE[\tilde{S}(t)^2] I_\mu(\nu_t)}, %+ E(t) 1_{t > \tau},
\]
where 
%\begin{multline*}
%s E(t) = |\EE[1_{X \notin K_n, Y \in K_n}(P_t \log P_t h (Y + U) - P_t \log P_t h(U))] \\
% + \EE[1_{Y \notin K_n, X \in K_n}(P_t \log P_t h(U) - P_t \log P_t h(X + U))]|
%\end{multline*}
%and
\begin{align*}
\tilde{S}&(t)  = 1_{(X,Y) \notin K_n^2}(f_1(t) \|b(U)\|_{a^{-1}(U)}  + f_2(t) \sqrt{d}) \\
&  f_1(t) \left\|\EE\left[1_{X,Y \in K_n} \left(\frac{Y(t)-X}{s} - b(X + U)\right) \mid X + U\right]  \right\|_{a^{-1}(X + U)}  \\
& +  f_2(t) \left\|\EE\left[1_{X,Y \in K_n}\left(\frac{(Y(t)-X)^{\otimes 2}}{2s} - a(X + U) \right)\mid X + U\right] \right\|_{a^{-1}(X + U)} \\
& + \sum_{k=3}^{\infty} \frac{f_k(t)}{s k!} \left\|\EE[1_{ X,Y \in K_n} (Y(t)-X)^{\otimes k} \mid X + U]\right\|_{a^{-1}(X + U)},
\end{align*}
with the functions $(f_k)_{k \geq 1}$ defined as in Proposition~\ref{pro:curvdim}. We thus have
\[
I_\mu((\tilde{\nu}_{n, \epsilon_1, \epsilon_2})_t)^{1/2} \leq \EE[\tilde{S}(t)^2]^{1/2} 
%+ E(t)^{1/2} 1_{t > \tau}
\]
and, following the computations of Section~\ref{sec:main}, we obtain
\beq
\label{eq:approxImain}
(1 - c e^{- \kappa T}) W_{2,a}(\tilde{\nu}_{n, \epsilon_1, \epsilon_2}, \mu) \leq  \int_{0}^T \EE[\tilde{S}(t)^2]^{1/2}\, dt.
%\int_{\tau}^T E(t)^{1/2}\, dt. 
\eeq

In the following, we denote by $C$ a generic constant and by $C(n)$ a generic constant depending only on $n$. 
Let $t \in [0,T]$ and 
\begin{align*}
R_1(t) & =  f_1(t) \left\|\EE\left[1_{X,Y \in K_n} \left( \frac{Y(t)-X}{s} -  b(X) \right) \mid X + U\right]  \right\|_{a^{-1}(X + U)}  \\
& +  f_2(t) \left\|\EE\left[1_{X,Y \in K_n} \left(\frac{(Y-X)^{\otimes 2}}{2s} - a(X) \right) \mid X + U\right] \right\|_{a^{-1}(X + U)}  \\
& + \sum_{k=3}^{\infty} \frac{f_k(t)}{s k!} \left\|\EE[1_{X,Y \in K_n} (Y(t)-X)^{\otimes k} \mid X + U]\right\|_{a^{-1}(X + U)} .
\end{align*}
By the triangle inequality and by Jensen's inequality, we have 
\begin{align*}
\EE[\tilde{S}(t)^2]^{1/2} & - \EE[R_1(t)^2]^{1/2} \leq f_1(t) \EE[\|b(U)\|_{a^{-1}(U)}^2 1_{(X,Y) \notin K_n^2}]^{1/2}  +f_2(t) \sqrt{d} P((X,Y) \notin K_n^2)^{1/2} \\
& + f_1(t) \EE[1_{X,Y \in K_n} \| b(X + U) - b(X) \|^2_{a^{-1}(X + U)}]^{1/2} \\
& + f_2(t) \EE[1_{X,Y \in K_n}\|a(X + U) - a(X) \|^2_{a^{-1}(X + U)}]^{1/2}. 
\end{align*}
Since $\epsilon_1 < e(1)$, we have that $U, X$ and $X+U$ belongs to a compact subset of $E$ as long as $X \in K_n$. Since $b$ and $a$ are infinitely differentiable on $E$, they are in particular Lipschitz continuous on these compact sets. Thus, 
\[
\EE[\tilde{S}(t)^2]^{1/2} - \EE[R_1(t)^2]^{1/2} \leq (f_1(t) + f_2(t)) \left(\epsilon_1 C(n) + C P((X,Y) \notin K_n^2)^{1/2} \right)
\]
and
\beq
\label{eq:Sbound1}
\int_0^T \EE[\tilde{S}(t)^2]^{1/2} \, dt - \int_0^T \EE[R_1(t)^2]^{1/2} \, dt \leq  C(n) \epsilon_1  + C P((X,Y) \notin K_n^2)^{1/2}. 
\eeq
Now, let 
\begin{align*}
R_2(t) & =  f_1(t) \left\|\EE\left[1_{X,Y \in K_n}\left(\frac{Y(t)-X}{s}   - b(X)\right) \mid X\right]  \right\|_{a^{-1}(X + U)} \\
& +  f_2(t) \left\|\EE\left[1_{X,Y \in K_n}\left(\frac{(Y(t)-X)^{\otimes 2}}{2s}  - a(X)\right)\mid X\right] \right\|_{a^{-1}(X + U)} \\
& + \sum_{k=3}^{\infty} \frac{f_k(t)}{s k!} \left\|\EE[1_{X,Y \in K_n} (Y(t)-X)^{\otimes k} \mid X]\right\|_{a^{-1}(X + U)}.
\end{align*}
Since $U$ and the process $(Y(t))_{t \geq 0}$ are independent, we have, by Jensen's inequality,
\begin{align*}
\left\|1_{ X,Y \in K_n} \EE[(Y(t)-X)^{\otimes k} \mid X+U]\right\|_{a^{-1}(X + U)}  \\
&  \hspace{-2cm} = \left\| \EE[\EE[1_{X,Y \in K_n} (Y(t)-X)^{\otimes k}  \mid X]\mid X+U]\right\|_{a^{-1}(X + U)}  \\
&  \hspace{-2cm} \leq \EE\left[\left\|\EE[1_{X,Y \in K_n} (Y(t)-X)^{\otimes k}  \mid X]\right\|_{a^{-1}(X + U)} \mid X+U\right].
\end{align*}
As a similar inequality holds for other terms of $R_1$, we have 
\beq
\label{eq:Sbound3}
\EE[R_1(t)^2]^{1/2} \leq \EE[R_2(t)^2]^{1/2}. 
\eeq 
Now, let 
\begin{align*}
R_3(t) & =  f_1(t) \left\|\EE\left[1_{X,Y \in K_n}\left( \frac{Y(t)-X}{s}  - b(X)\right) \mid X\right]  \right\|_{a^{-1}(X)}\\
& +  f_2(t) \left\|\EE\left[1_{X,Y \in K_n}\left(\frac{(Y(t)-X)^{\otimes 2}}{2s}  - a(X)\right)\mid X\right] \right\|_{a^{-1}(X)} \\
& + \sum_{k=3}^{\infty} \frac{f_k(t)}{s k!} \left\|\EE[1_{ X,Y \in K_n} (Y(t)-X)^{\otimes k} \mid X ]\right\|_{a^{-1}(X)}.
\end{align*}
For $x,y \in K_n'$,  $k \in \NN^\star$ and $u \in (\RR^d)^{\otimes k}$, we have  
\begin{align*}
\|u\|^2_{a^{-1}(y)} - \|u\|^2_{a^{-1}(x)} & = \sum_{i,j \in \{1,\dots,d\}^k} u_i u_j (\Pi_{l=1}^k a^{-1}_{i_l,j_l}(y) - \Pi_{l=1}^k a^{-1}_{i_l,j_l}(x)) \\
& \leq \|u\|^2 \left(\sum_{i,j \in \{1,\dots,d\}^k} \left(\Pi_{l=1}^k a^{-1}_{i_l,j_l}(y) - \Pi_{l=1}^k a^{-1}_{i_l,j_l}(x)\right)^2 \right)^{1/2} \\
& \leq d^k \|u\|^2 \sup_{i,j \in \{1,\dots,d\}^k} \left|\Pi_{l=1}^k a^{-1}_{i_l,j_l}(y) - \Pi_{l=1}^k a^{-1}_{i_l,j_l}(x) \right|.
\end{align*}
Since $a^{-1}$ is infinitely differentiable on $E$, its coefficients are bounded and have bounded derivatives on the compact set $K'_n$. Hence, there exists $C(n)$ such that, for any $i,j \in \{1,\dots,d\}^{k}$,
\begin{align*}
\Pi_{l=1}^k a^{-1}_{i_l,j_l}(y) - \Pi_{l=1}^k a^{-1}_{i_l,j_l}(x) & \leq  \sup_{\xi \in K_n} \| \nabla \Pi_{l=1}^k a^{-1}_{i_l,j_l}(\xi)\| \|y-x\| \\
& \leq k \left\|\sup_{\xi \in K_n, i,j \in \{1,\dots,d\}} \nabla a^{-1}_{i,j} \right\| \left|\sup_{\xi \in K_n, i,j \in \{1,\dots,d\}} a^{-1}_{i,j}\right|^{k-1} \|y-x\| \\
& \leq C(n)^k \|y-x\|
\end{align*}
leading to 
\[
\|u\|^2_{a^{-1}(y)} - \|u\|^2_{a^{-1}(x)} \leq \|u\|^2 C(n)^k \|y-x\|.
\]
Therefore, by the triangle inequality and since $\|U\| \leq \epsilon_1$,
\begin{multline*}
\EE[R_2(t)^2]^{1/2} - \EE[R_3(t)^2]^{1/2} \leq \epsilon_1^{1/2} \bigg(f_1(t) \EE[C(n)  \|b(X)\|^2 1_{X,Y \in K_n}]^{1/2} \\ 
+ f_2(t)  \EE[ C(n)^2 \|a(X)\|^2 1_{X,Y \in K_n}]^{1/2}  
+ \frac{1}{\step} \EE\left[\left(\sum_{k=1}^\infty \frac{f_k(t) C(n)^k}{k!} \|Y(t) - X\|^{2k} 1_{X, Y \in K_n} \right)^2 \right]^{1/2}\bigg).
\end{multline*}
Now, if $t < \tau$, we have 
\[
\|Y(t) - X\| = \| X - X\| = 0.
\]
On the other hand, if $t \geq \tau$, 
\[
\|Y(t) - X\| 1_{X, Y \in K_n} = \|Y-X\| 1_{X, Y \in K_n} \leq 2D(n),
\]
where $D(n)$ is the diameter of the compact set $K_n$. Therefore, 
\beq
\label{eq:Sbound2}
\int_0^T \EE[R_2(t)^2]^{1/2} \, dt - \int_0^T \EE[R_3(t)^2]^{1/2} \, dt \leq C(n) \epsilon_1^{1/2}.
\eeq
Finally, 
\begin{multline*}
\EE[R_3(t)^2]^{1/2} - \EE[S(t)^2]^{1/2} \leq \EE\left[ \left(\sum_{k=1}^\infty \frac{f_k(t)}{k!} \|Y(t) - X\|_{a^{-1}(X)}^k  \right)^2 1_{(X,Y) \notin K_n^2}\right]^{1/2} \\
+ f_1(t) \EE[\|b(X)\|^2_{a^{-1}(X)} 1_{(X,Y) \notin K_n^2}]^{1/2} + f_2(t) \sqrt{d} \mathbb{P}((X,Y) \notin K_n^2) ,
\end{multline*}
where $S(t)$ is defined in (\ref{eq:Sdef}).

\begin{multline}
\int_0^T \EE[R_3(t)^2]^{1/2} \, dt - \int_0^T \EE[S(t)^2]^{1/2} \, dt \leq T \EE\left[  \sup_{t \in [\tau, T]} \left(\sum_{k=1}^\infty \frac{f_k(t)}{k!} \|Y(t) - X\|_{a^{-1}(X)}^k  \right)^2 1_{(X,Y) \notin K_n^2}\right]^{1/2} \\
+ C\left(\EE[\|b(X)\|^2_{a^{-1}(X)} 1_{(X,Y) \notin K_n^2}]^{1/2} + \mathbb{P}((X,Y) \notin K_n^2)\right).
\end{multline}
We can assume that $\|b\|_{a{-1}} \in L_2(\nu)$, otherwise the bound of Theorem \ref{thm:main} is just a trivial one. Thus, by \ref{ass:mainnu3}, we have that there exists $L(n)$ such that $\lim \limits_{n \rightarrow \infty} L(n) = 0$ and 
\beq
\label{eq:Sbound4}
\int_0^T \EE[R_3^2]^{1/2} \, dt - \int_0^T \EE[S(t)^2]^{1/2} \, dt \leq L(n).
\eeq
By combining (\ref{eq:Sbound1}), (\ref{eq:Sbound3}), (\ref{eq:Sbound2}) and (\ref{eq:Sbound4}), we finally obtain that there exists  $C, C(n) > 0$ such that 
\beq
\label{eq:Sboundfinal}
\int_0^T \EE[\tilde{S}(t)^2]^{1/2} \, dt \leq \int_0^T \EE[S(t)^2]^{1/2} \, dt + C(n) \epsilon_1^{1/2} + C P((X,Y) \notin K_n^2)^{1/2} + L(n).  
\eeq
Letting $\epsilon_1$ go to zero, $n$ go to infinity and $\epsilon_2$ go to zero thus yields 
\[
\lim\limits_{ \epsilon_2 \rightarrow 0} \lim\limits_{ n \rightarrow \infty} \lim\limits_{ \epsilon_1 \rightarrow 0} W_{2,a}(\tilde{\nu}_{n, \epsilon_1, \epsilon_2}, \mu) \leq \int_0^T \EE[S(t)^2]^{1/2} dt
\]
and, since $d_a(.,0) \in L_2(\nu) \cap L_2(\mu)$, then, by Theorem 6.9 \citep{Villani}, 
\[
\lim\limits_{ \epsilon_2\rightarrow 0} \lim\limits_{ n \rightarrow \infty} \lim\limits_{ \epsilon_1 \rightarrow 0} W_{2,a}(\tilde{\nu}_{n, \epsilon_1, \epsilon_2}, \nu) = 0
\]
which concludes the proof since 
\[
W_{2,a}(\nu, \mu) \leq \lim\limits_{ \epsilon_2 \rightarrow 0} \lim\limits_{ n \rightarrow \infty} \lim\limits_{ \epsilon_1 \rightarrow 0} W_{2,a}(\nu, \tilde{\nu}_{n, \epsilon_1, \epsilon_2}) + W_{2,a}(\tilde{\nu}_{n, \epsilon_1, \epsilon_2}, \mu) \leq \int_0^T \EE[S(t)^2]^{1/2} dt.
\]

%%%%%%%%%%%%%%%%%%%%%%%%%%%%%%%%%%%%
%\bibliographystyle{imsart-number}
\bibliographystyle{plain}
\bibliography{Bibliography}

\end{document}